\documentclass[11pt]{article}
\usepackage[a4paper, total={11.5in, 9.5in}]{geometry}
\usepackage{cite}
\usepackage{float}
\usepackage{pdflscape}
\usepackage{mathtools}  
\allowdisplaybreaks
\usepackage{amssymb,amsfonts} 
\usepackage{authblk}
\usepackage{fullpage}
\usepackage{graphicx}
\usepackage{lscape}
\usepackage{bm}
\usepackage{comment}
\usepackage{enumerate}
\usepackage{hyperref}
\usepackage[utf8]{inputenc} % allow utf-8 input
\usepackage{booktabs} 
\usepackage[T1]{fontenc}    % use 8-bit T1 fonts
\usepackage{url}            % simple URL typesetting
\usepackage{multirow}
\usepackage{amsthm}
\usepackage{amsmath}
\usepackage{siunitx}
\usepackage{subfigure}
\usepackage[textsize=tiny]{todonotes}

\newcommand{\p}{\partial}

\newtheorem{thm}{Theorem}[section]

\newtheorem{lemma}[thm]{Lemma}

\newcommand{\jump}[1]{[\![#1]\!]}
\newcommand{\con}{\textbf{U}}

\newcommand{\F}{\textbf{F}}

\newcommand{\ent}{\mathcal{E}}

\newcommand{\evar}{\textbf{V}}

\newcommand{\es}{\textbf{es}}
\newcommand{\explicit}{\textbf{exp}}

\newcommand{\ote}{\textbf{O2}^\es_\explicit}
\newcommand{\othe}{\textbf{O3}^\es_\explicit}
\newcommand{\ofe}{\textbf{O4}^\es_\explicit}

\newcommand{\bbb}[1]{\left[\!\left[#1\right]\!\right]}
\newcommand{\ccc}[1]{\left\{\!\!\left\{#1\right\}\!\!\right\}}

\usepackage{mdframed}
\usepackage[font=small]{caption}

\usepackage{placeins}

\title{Entropy dissipative high order schemes for a hyperbolic model of two-layer thin film flow}
\author{Rahul Barthwal$^1$, Philipp Öffner$^2$, Chetan Singh$^3$, and Jan Zawallich$^{2,\star}$}
\date{$^1$ Germany, University of Stuttgart, Institute of Applied Analysis and Numerical Simulation, Chair of Applied Mathematics\\ $^2$ Germany, Clausthal University of Technology, Institute of Mathematics, Numerical Analysis Research Group\\ $^3$ India, Indian Institute of Technology Delhi, Department of Mathematics\\ $^\star$ Corresponding author}

\begin{document}
\maketitle
\begin{abstract}
In this article, we develop high-order, entropy-dissipative schemes for a hyperbolic system of conservation laws describing the first-order dynamics of two-layer thin-film flows of immiscible fluids with perfectly soluble solute particles. The key aspect is to construct entropy-conservative fluxes in the sense of Tadmor. Here, we consider and compare two different construction processes: an exact formulation and a commonly used simplified approximation in the flux evaluation. While the approximate approach is simpler at the level of construction and is frequently employed in practice (also in other contexts), it is less structured, leading to increased computational cost in the implementation  compared to the exact formulation, which has been much more complicated to derive. 
We employ these fluxes within both an entropy-dissipative finite-difference framework and an entropy-dissipative discontinuous Galerkin spectral element method. Through numerical experiments using two distinct spatial discretizations, we investigate the performance and robustness of the proposed methods and demonstrate that the choice of numerical flux has a noticeable impact on  the efficiency of the resulting solvers.
\end{abstract}

\section{Introduction}
Thin film flows have been extensively investigated within the framework of hyperbolic balance laws, particularly for gravity-driven films, inclined-plane flows, and surfactant-mediated dynamics; see, e.g., \cite{barthwal2022two, bertozzi1999undercompressive, barthwal2023construction, pandey2025construction, levy2006motion, cook2008shock}. These models arise in numerous industrial and environmental applications, including coating technologies, thin-film photovoltaics, and biomedical surfactant therapies. 

Mathematically, thin film equations are typically formulated as fourth-order degenerate hyperbolic–parabolic systems, with degeneracy occurring as the film height vanishes. In appropriate asymptotic regimes, however, the large-scale lubrication dynamics can be reduced to a first-order hyperbolic subsystem; see, e.g., \cite{barthwal2023construction,barthwal2025hyperbolic, cook2008shock}.
For hyperbolic systems, an important additional physical structure is provided by an entropy inequality. In particular, for scalar conservation laws and for one-dimensional systems, entropy conditions play a crucial role in selecting the physically relevant weak solution and establishing well-posedness \cite{bressan2000hyperbolic,dafermos2005hyperbolic}. This is particularly important for the hyperbolic thin-film subsystem considered here, where discontinuities and shock-like structures may develop even from smooth initial data. The entropy inequality therefore provides a natural criterion for identifying the physically admissible solution among the potentially many weak solutions of the hyperbolic system. Only after establishing this entropy structure, it is meaningful to ask how it should be preserved at the discrete level.
Therefore, the entropy structure of this hyperbolic subsystem is a key ingredient in the design of stable and physically consistent numerical discretizations of the full model.

The development of entropy-dissipative schemes has attracted considerable research interest in recent years, spanning a wide range of discretization frameworks, from finite-difference methods to finite-element-type approaches; see, e.g., \cite{abgrall2022,abgrall2022re,oeffner2023,gaburro2023,fjordholm2012arbitrarily,bhoriya2023high, FISHER2013518, GASSNER201639,leFloch2002} and references therein. Most of these developments build upon the seminal works of Tadmor \cite{tadmor1987,tadmor2003entropy}, who established the fundamental framework for entropy-conservative numerical approximations and the construction of entropy-conservative two-point fluxes. Entropy-dissipative fluxes can subsequently be obtained by adding suitable dissipative terms to these entropy-conservative fluxes. In the context of thin-film flows, provably positivity-preserving and entropy-stable numerical methods based on an explicit entropy structure have been successfully developed for scalar, single-layer equations; see, e.g., \cite{zhornitskaya1999positivity, kim_positivity} and the references therein. However, extending these approaches to coupled single- and multi-layer systems remains largely open. 

In this article, we take a step in this direction by constructing entropy-dissipative high-order numerical schemes for a system of conservation laws describing the leading-order dynamics of two-layer thin-film flows of immiscible fluids under the influence of anti-surfactants. A key ingredient in the construction of entropy-stable schemes is the development of suitable entropy-conservative two-point fluxes in the sense of Tadmor. In particular, we derive and investigate two different sets of entropy-conservative two-point fluxes and use them as building blocks for the construction of high-order entropy-stable schemes. The first is based on a commonly used approximate evaluation of the entropy-conservative flux, while the second employs an exact evaluation. The approximate construction is considerably simpler and follows an approach that is also frequently used for other hyperbolic systems, including the compressible Euler equations \cite{winters2020,carlo2023}. In contrast, the exact construction requires a more involved derivation, but preserves an additional symmetry structure in flux evaluation. This structure has important consequences for the resulting discretization and leads to improved properties when solving the linear system arising at each time step, which has to be solved for both constructions. 
 Thus, the choice of entropy-conservative flux directly affects not only the complexity of the flux construction but also the computational cost, robustness, and stability of the resulting solver.
 We incorporate both flux formulations into an entropy-dissipative finite-difference (FD) framework~\cite{fjordholm2013eno,singh2025entropy,singh2026entropy,singh2026oftt}, as well as into an entropy-dissipative discontinuous Galerkin spectral element method (DGSEM) \cite{GASSNER201639, oeffner2023}. By considering these two distinct spatial discretizations, we can assess the influence of the flux construction independently of the underlying discretization and compare the resulting schemes in terms of computational efficiency and robustness. 
 We incorporate both flux formulations into an entropy-dissipative finite-difference (FD) framework  \cite{singh2025entropy,fjordholm2013eno}, as well as into an entropy-dissipative discontinuous Galerkin spectral element method (DGSEM) \cite{GASSNER201639, oeffner2023}. By considering these two distinct spatial discretizations, we can assess the influence of the flux construction independently of the underlying discretization and compare the resulting schemes in terms of computational efficiency and robustness. 
 
 The rest of the article is organized as follows: In Section \ref{sec_model}, we introduce the thin-film model and the underlying structural properties. Afterward, in Section \ref{se_methods}, we introduce the entropy-dissipative FD method and DGSEM and describe how entropy-conservative two-point fluxes can be constructed for the thin-film model.  In Section \ref{se_numerics}, we assess the proposed fluxes through numerical experiments for one- and two-dimensional problems, comparing their accuracy, robustness, and computational efficiency independent of the underlying discretization. A summary and outlook are then given. In the appendix, we provide additional calculations that are omitted from the main text for better readability.

\section{Model}\label{sec_model}

The thin-film model considered in this work is described in detail in \cite{barthwal2025hyperbolic}. It is given by the following one-dimensional hyperbolic system\footnote{We restrict our analysis here to the one-dimensional setting for the sake of clear presentation. Numerical results for the two-dimensional case are presented in Section \ref{se_numerics}. }:

%The hyperbolic system takes the  form
\begin{equation}\label{eq: Main_system}
\begin{aligned}
    \dfrac{\partial f}{\partial t}+\dfrac{1}{2}\dfrac{\partial}{\partial x}\left(f^2b\right)&=0,\vspace{0.2 cm} \\
    \dfrac{\partial b}{\partial t}+\dfrac{1}{2}\dfrac{\partial}{\partial x}\left(fb^2\right)&=0,\vspace{0.2 cm} \\
     \dfrac{\partial g}{\partial t}+\dfrac{\partial}{\partial x}\left(\dfrac{g^2q}{2}+fgb\right)&=0,\vspace{0.2 cm} \\
  \dfrac{\partial q}{\partial t}+\dfrac{\partial}{\partial x}\left(\dfrac{gq^2}{2}+fbq\right)&=0.
  \end{aligned}
\end{equation}
In \eqref{eq: Main_system}, $f$ and $g$ denote the positive film thicknesses in the two layers and the quantities 
%\[
$b$ %=\dfrac{\partial c_1}{\partial x},\quad\text{and}\quad 
 and
$q$ %=\dfrac{\partial c_2}{\partial x}
%\]
denote the spatial derivatives of the concentrations of the solute in each layer. 

The first-order system \eqref{eq: Main_system} can be written in the compact conservative form 
\begin{equation}\label{eq:conservation-laws}
\con_t+(F(\con))_x=0,
\end{equation}
where $\con=(f,b,g,q)^{\top}$ denotes the vector of conservative variables and 
\begin{align}\label{eq:flux}
F(\con)=\left(\frac 12 f^2b,\frac 12fb^2,\frac 12 g^2q+fbg,\frac 12 gq^2+fbq\right)^{\top}
\end{align}
is the flux vector.

Moreover, we consider the state space
\begin{align}\label{state_space}
    \mathcal{{U}}=\{(f, b, g,q )\in \mathbb{R}^4 : f, g, q>0,~ b\neq 0, ~gq>fb, ~2fb+gq>0\}.
\end{align}
Note that we take a more generalized state space in this article than the one used in \cite{barthwal2025hyperbolic}. This is motivated by the fact that the articles \cite{barthwal2025hyperbolic} and \cite{barthwal2025generalized} only consider a single signed condition on $b$ and thus consider only unidirectional dynamics of the solution behavior. We relax this condition in this article and consider the state space \eqref{state_space}, which automatically reduces to the state space considered in \cite{barthwal2025hyperbolic} and \cite{barthwal2025generalized} for $b>0$ and $b<0$, respectively.

Based on the sign of $b$, the eigenstructure of the system \eqref{eq: Main_system} satisfies the following ordering relations. 
For $b>0$,
\begin{align}\label{eigenvalues}
\lambda_1(\mathbf{U})=\dfrac{fb}{2}<~\lambda_2(\mathbf{U})=\dfrac{3fb}{2}<~\lambda_3(\mathbf{U})=fb+\dfrac{gq}{2}<~\lambda_4(\mathbf{U})=fb+\dfrac{3gq}{2}.
\end{align}
and for $b<0$
\begin{align}\label{eigenvalues_1}
\lambda_1(\mathbf{U})=\dfrac{3fb}{2}<~\lambda_2(\mathbf{U})=\dfrac{fb}{2}<~\lambda_3(\mathbf{U})=fb+\dfrac{gq}{2}<~\lambda_4(\mathbf{U})=fb+\dfrac{3gq}{2}.
\end{align}
Moreover, the (right) eigenvectors $\mathbf{r_k}= \mathbf{r_k}(\mathbf{U})$ for $b>0$ are
\begin{equation}\label{righteigenvectors}
\begin{array}{rcl}\displaystyle 
\mathbf{r_1}(\mathbf{U})= \left(-\frac{f}{b},1,0,0\right)^{\top}, && \!\!\!\!\!\! \displaystyle \mathbf{r_2}(\mathbf{U})=\left(\frac{fb-3gq}{4qb},\frac{fb-3gq}{4qf},\frac{g}{q},1\right)^{\top},\\[3ex]\displaystyle 
\mathbf{r_3}(\mathbf{U})=\left(0,0,-\frac{g}{q},1\right)^{\top}, & & \!\!\!\!\!\! \displaystyle \mathbf{r_4}(\mathbf{U})=\left(0,0,\frac{g}{q},1\right)^{\top}.
\end{array}
\end{equation}
The eigenvectors are linearly independent in $\mathcal U$.  This implies the strong hyperbolicity of the reduced system \eqref{eq: Main_system} in $\mathcal U$.

\subsection{Entropy/entropy pairs of the hyperbolic system \eqref{eq: Main_system}}
In this section, we first obtain a strictly convex entropy for the system \eqref{eq: Main_system} independent of the sign of $b$, which generalizes (while simplifying) the convex entropy obtained in \cite{barthwal2025hyperbolic}. We first recall the general class of entropies obtained in \cite{barthwal2025hyperbolic}.
\begin{thm}[Entropy/entropy-flux pairs] \label{theo1}
Let $\rho, \mu$ and $\nu$ be arbitrary functions in  $C^1(0,\infty)$.
For the system \eqref{eq: Main_system}, there exists a class of entropy/entropy-flux pairs $( \eta, \mathcal{F} )$ which is 
given by
\begin{equation}\label{eq:entropyclass}
\begin{array}{rcl}
 \eta[f, b, g, q]&=& \rho(fb)+\sqrt{fb}\mu\left(\dfrac{b}{\sqrt{fb}}\right)+\sqrt{gq}\nu\left(\dfrac{q}{\sqrt{gq}}\right)+\dfrac{1}{fb+gq},\\[3ex]
 \mathcal{F}[f, b, g, q]&=&\psi(fb)+(fb)^{3/2} \mu\left(\dfrac{b}{\sqrt{fb}}\right)+(\sqrt{gq}) \nu\left(\dfrac{q}{\sqrt{gq}}\right)(fb+gq)\\
&&\hspace{3.5 cm}-\ln \left((fb+gq)^{3/2}\right)-\dfrac{fb}{2(fb+gq)}.
\end{array}
\end{equation}
The function $\psi$ is determined from $\psi'(w)=\dfrac{3}{2}  w\rho'(w)$.% with $u=fb$.
\end{thm}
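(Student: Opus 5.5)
We verify the entropy identity $\eta_t+\mathcal{F}_x=0$ along smooth solutions of \eqref{eq: Main_system}, which is equivalent to $\nabla_{\con}\eta^{\top}\,DF(\con)=\nabla_{\con}\mathcal{F}^{\top}$. Both sides of \eqref{eq:entropyclass} are sums of four terms, and the identity is linear in the pair $(\eta,\mathcal{F})$. So we split the claim into four independent sub-pairs and check each one:
\[
\bigl(\rho(fb),\psi(fb)\bigr),\qquad
\bigl(\sqrt{fb}\,\mu(r),\,(fb)^{3/2}\mu(r)\bigr),\qquad
\bigl(\sqrt{gq}\,\nu(s),\,\sqrt{gq}\,\nu(s)(fb+gq)\bigr),
\]
\[
\Bigl(\tfrac{1}{fb+gq},\,-\ln\bigl((fb+gq)^{3/2}\bigr)-\tfrac{fb}{2(fb+gq)}\Bigr),
\]
where $r=b/\sqrt{fb}$ and $s=q/\sqrt{gq}$. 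Since $\rho,\mu,\nu$ are arbitrary, this is the natural decomposition.

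First we derive evolution equations for the reduced variables $w=fb$, $r$, $v=gq$, $s$ from \eqref{eq: Main_system}. Expanding the first two equations gives
\[
w_t=b f_t+f b_t=-\tfrac32\, w\, w_x ,
\]
which is a Burgers equation for $w$. Hence $\rho(w)_t=-\tfrac32 w\rho'(w)w_x=-\psi(w)_x$, using $\psi'(w)=\tfrac32 w\rho'(w)$. This settles the first pair.

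Next we compute $(b/f)_t$, or equivalently $r_t$, and expect a pure transport equation $r_t+\lambda\, r_x=0$ with $\lambda$ one of the eigenvalues $fb/2$ or $3fb/2$. Combined with the equation for $\sqrt{w}$, this should give $(\sqrt{w}\,\mu(r))_t+(w^{3/2}\mu(r))_x=0$. The $\mu'$-terms must cancel through the transport equation for $r$, and the remaining terms through the Burgers equation, since $(\sqrt w)_t=-\tfrac32 w(\sqrt w)_x$ while $(w^{3/2})_x=3w(\sqrt w)_x$.

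For the second layer we proceed the same way. From the $g$ and $q$ equations we derive a closed equation for $v=gq$, expected to be of the form $v_t+\bigl((w+\tfrac32 v)\,v\bigr)_x=c\,v\,w_x$ (the precise coupling coefficient $c$ will be read off). We also derive a transport equation for $s$, expected with speed $w+v/2$ (the eigenvalue $\lambda_3$). Then we check that the $\nu$-pair satisfies the identity, keeping track of the $w_x$ coupling terms.

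The last pair is where I expect the main difficulty. It couples the layers: $\eta=1/(w+v)$ gives
\[
\eta_t=-\frac{w_t+v_t}{(w+v)^2},
\]
and we must show this equals $-\partial_x$ of the stated logarithmic-rational flux. Substituting the $w$- and $v$-equations, we will try to write the numerator as an exact derivative. After multiplying by $(w+v)^2$, comparison with
\[
\partial_x\Bigl(\tfrac32\ln(w+v)+\tfrac{w}{2(w+v)}\Bigr)
\]
reduces to a polynomial identity in $w,v,w_x,v_x$. Matching the coefficients of $w_x$ and $v_x$ separately should confirm it. If the $v$-equation turns out to have a different coupling term than anticipated, I would redo this matching; this would fix the coefficient in front of $fb/(2(fb+gq))$.
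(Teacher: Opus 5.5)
The paper gives no proof of this theorem; it is recalled from \cite{barthwal2025hyperbolic}. Your verification therefore has to stand on its own, and it fails at the $\mu$- and $\nu$-pairs. With the flux terms as printed, the cancellations you announce do not occur, and no argument can supply them.

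\textbf{The $\mu$-pair.} Your own intermediate formulas already show the failure. With $w=fb$ and $r^2=b/f$, one gets $w_t+\frac32 ww_x=0$ and $(b/f)_t+\frac{w}{2}(b/f)_x=0$, so $r$ travels with speed $\lambda_1$. Hence
\[
(\sqrt w\,\mu(r))_t+(w^{3/2}\mu(r))_x=\mu\bigl[(\sqrt w)_t+(w^{3/2})_x\bigr]+\sqrt w\,\mu'(r)\bigl(r_t+wr_x\bigr)=\frac34\sqrt w\,w_x\,\mu(r)+\frac12 w^{3/2}\mu'(r)\,r_x\neq 0 .
\]
The two identities you quote give $-\frac32 w(\sqrt w)_x+3w(\sqrt w)_x\neq 0$, which contradicts your claim that ``the remaining terms cancel through the Burgers equation.'' The $\mu'$-terms would need transport speed $fb$, which is neither eigenvalue. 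A one-line counterexample is $\mu(r)=r$: the $\mu$-part of $\eta$ becomes $b$, whose entropy flux must be $\frac12 fb^2$, but the formula gives $(fb)^{3/2}b/\sqrt{fb}=fb^2$.

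\textbf{The $\nu$-pair.} The second layer fails in the same way. The closed equation for $v=gq$ is $v_t+(w+\frac32 v)v_x+2vw_x=0$, i.e.\ $v_t+(wv+\frac34 v^2)_x=-vw_x$. This is not your anticipated form: the quadratic coefficient is $\frac34$ and $c=-1$. Moreover, $s^2=q/g$ is transported with speed $\lambda_3=w+\frac v2$. Then
\[
(\sqrt v\,\nu(s))_t+(\sqrt v\,\nu(s)(w+v))_x=\frac34\sqrt v\,v_x\,\nu(s)+\frac12 v^{3/2}\nu'(s)s_x\neq 0 .
\]
Taking $\nu(s)=s$ gives the entropy $q$, whose true flux is $fbq+\frac12 gq^2$ rather than $fbq+gq^2$. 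Since the identity is linear in $(\eta,\mathcal F)$, the other terms cannot compensate these defects. The statement as printed is false for general $\mu,\nu$.

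\textbf{What your argument does prove.} Your decomposition proves the corrected theorem, in which the two offending flux terms are replaced by $\frac12(fb)^{3/2}\mu\bigl(b/\sqrt{fb}\bigr)$ and $\sqrt{gq}\,\nu\bigl(q/\sqrt{gq}\bigr)\bigl(fb+\frac12 gq\bigr)$. With these, the $\mu$- and $\mu'$-coefficients vanish exactly by the equations for $w$ and $r$, and the $\nu$- and $\nu'$-coefficients by those for $v$ and $s$. The other two pairs are correct as stated:
\begin{itemize}
\item The $\rho$-pair follows from the Burgers equation exactly as you wrote.
\item For the coupling pair, use the correct $v$-equation. Both $(fb+gq)^2\eta_t$ and $-(fb+gq)^2\mathcal F_x$, restricted to that pair, equal $\frac32 ww_x+2vw_x+(w+\frac32 v)v_x$. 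So the coefficient in front of $fb/(2(fb+gq))$ is confirmed, not something to be adjusted.
\end{itemize}
The subsequent Lemma uses only $\rho(u)=1/|u|$ with $\mu=\nu=0$, so nothing downstream depends on the misprinted terms.
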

First, we recall that the system \eqref{eq: Main_system} is equipped with a full set of Riemann invariants \cite{barthwal2025hyperbolic} and thus the convexity condition of an entropy $\eta\in \ent$ gets simpler. It is sufficient to check that the Hessian  $H_{\eta}$ of $\eta$ satisfies the conditions (see Chapter 7.4 of \cite{dafermos2005hyperbolic})
\begin{equation} \label{reducedcriterion}
\mathbf{r}_k(\mathbf{U})^\top H_{\eta}(\mathbf{U})\, \mathbf{r_k(\mathbf{U})}>0 \qquad \text{for all }~k\in \{1, \ldots, 4\} \text{ and all $\mathbf{U}\in \mathcal U$.}
\end{equation}
We derive the following entropy-entropy flux pair for the system \eqref{eq: Main_system} in the following lemma.
\begin{lemma}
For any $\mathbf{U}\in \mathcal{U}$, the following pair of functions form an entropy-entropy pair:
\begin{equation}\label{Eq:Ent-EntFl pair}
\eta= \frac{1}{|fb|}+ \frac{1}{fb+gq},\qquad \mathcal{F}= -\frac{1}{2}\left(\frac{fb}{fb+gq} +3\ln(|fb|^{\sigma(b)})+3\ln(fb+gq)\right),% to be found
\end{equation}
where $\sigma(b)$ is the sign of $b$ and $\eta$ is a strictly convex function for $\mathbf{U}\in \mathcal{U}$. 
In particular, for smooth solutions of the hyperbolic system \eqref{eq: Main_system}, we have 
\begin{align}
    \frac{\partial}{\partial t}\eta+\frac{\partial}{\partial x}\mathcal{F}=0.
\end{align}
\end{lemma}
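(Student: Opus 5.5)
The plan is to obtain the entropy pair \eqref{Eq:Ent-EntFl pair} as a special case of Theorem~\ref{theo1}, extended to $b<0$, and then to check strict convexity with the reduced criterion \eqref{reducedcriterion}. Throughout I write $w=fb$ and $s=gq$. First I would record two consequences of the definition \eqref{state_space} of $\mathcal U$. Since $f>0$, we have $\sigma(w)=\sigma(b)$. Also $w+s>0$: for $b>0$ this is clear, and for $b<0$ it follows from $2fb+gq>0$, which gives $s>-2w>-w$. Hence every logarithm and every denominator in \eqref{Eq:Ent-EntFl pair} is well defined on $\mathcal U$.

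\emph{Entropy identity.} In Theorem~\ref{theo1} I choose $\mu=\nu=0$ and $\rho(w)=1/|w|$. Then $\rho'(w)=-\sigma(w)/w^2$, so $\psi'(w)=\tfrac32 w\rho'(w)=-\tfrac{3\sigma(w)}{2w}$, and one may take $\psi(w)=-\tfrac32\sigma(w)\ln|w|=-\tfrac32\ln(|w|^{\sigma(b)})$. Substituting into \eqref{eq:entropyclass} reproduces exactly $\eta$ and $\mathcal F$ of \eqref{Eq:Ent-EntFl pair}.

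Theorem~\ref{theo1} is formulated for $\rho\in C^1(0,\infty)$, i.e.\ for $w>0$. For $b<0$ I would therefore verify the compatibility condition $\nabla\eta^\top F'(\con)=\nabla\mathcal F^\top$ directly. This condition is a pointwise identity in first derivatives, and $\eta,\mathcal F$ depend only on $(w,s)$. Since
\begin{equation*}
\partial_t w+\tfrac32 w\,\partial_x w=0,
\end{equation*}
and $s$ obeys an analogous closed equation for smooth solutions of \eqref{eq: Main_system}, the computation reduces to the same algebra on each connected component $\{b>0\}$ and $\{b<0\}$. On $\{b<0\}$ we have $\rho=-1/w$ and $\psi=\tfrac32\ln|w|$. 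This yields $\partial_t\eta+\partial_x\mathcal F=0$ for smooth solutions.

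\emph{Convexity.} I would write $\eta=\varphi(w,s)$ and use
\begin{equation*}
r^\top H_\eta r=\varphi_w D^2w[r,r]+\varphi_s D^2s[r,r]+(\text{second-order terms in } Dw\,r,\ Ds\,r),
\end{equation*}
with $D^2w[r,r]=2r_fr_b$ and $D^2s[r,r]=2r_gr_q$. I would also check that the eigenvectors \eqref{righteigenvectors} remain valid for $b<0$, since they are algebraic in $\con$.
\begin{itemize}
\item For $\mathbf r_1$: $Dw\,\mathbf r_1=0$ and $Ds\,\mathbf r_1=0$, so the form equals
\begin{equation*}
\frac{2f}{b}\Big(\frac{\sigma}{w^2}+\frac{1}{(w+s)^2}\Big).
\end{equation*}
This is positive for $b>0$. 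For $b<0$ it is positive iff $|w+s|>|w|$, which is exactly $2fb+gq>0$.
\item For $\mathbf r_3$: $Ds\,\mathbf r_3=0$, and the form equals $2g/\big(q(w+s)^2\big)>0$.
\item For $\mathbf r_4$: $Ds\,\mathbf r_4=2g$, and the form equals $\dfrac{2g(3gq-fb)}{q(w+s)^3}>0$, using $gq>fb$.
\end{itemize}

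\emph{Main obstacle.} The hard case is $\mathbf r_2$. There $Dw\,\mathbf r_2$, $Ds\,\mathbf r_2$, $D^2w$ and $D^2s$ are all nonzero, and the cross term $\varphi_{ws}$ enters. I would compute $Dw\,\mathbf r_2=(fb-3gq)/(2q)$ and $Ds\,\mathbf r_2=2g$, clear the common positive factor $(w+s)^{-3}$, and separate the two sign cases of $b$. I would then try to factor the resulting polynomial in $(w,s)$ into terms whose signs are fixed by $s>w$ and $2w+s>0$. If a clean factorization fails, I would fall back on writing the form as a quadratic in $t=s/|w|$ and checking its positivity on the admissible range of $t$.
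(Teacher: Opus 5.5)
Your proposal follows the paper's proof, using the same specialization $\rho(w)=1/|w|$, $\mu=\nu=0$ of Theorem~\ref{theo1} and the same eigenvector-by-eigenvector check of \eqref{reducedcriterion}, with two sound refinements: you verify the compatibility condition directly on $\{b<0\}$ through the closed $(w,s)$ system (with $s_t+(w+\tfrac32 s)s_x+2s\,w_x=0$), where Theorem~\ref{theo1}, stated for $C^1(0,\infty)$, does not literally apply and the paper only says ``it is easy to see'', and you use $(w,s)$ chain-rule bookkeeping instead of the full $4\times4$ Hessian; your values for $\mathbf r_1,\mathbf r_3,\mathbf r_4$ are correct (the paper's printed $b<0$ value for $\mathbf r_1$ lacks a factor $b^{-2}$, which does not affect its sign). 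The $\mathbf r_2$ case you left open closes cleanly: since $Dw\,\mathbf r_2+Ds\,\mathbf r_2=(w+s)/(2q)$ and $D^2w[\mathbf r_2,\mathbf r_2]=(w-3s)^2/(8q^2w)$, the $1/|w|$ part contributes $\frac{3(w-3s)^2}{8q^2|w|w^2}$ and the $1/(w+s)$ part contributes $\frac{3(w-3s)}{8q^2w(w+s)}$, so $\mathbf r_2^\top H_\eta\mathbf r_2$ equals $\frac{3(3s-w)(3s^2+2ws-2w^2)}{8q^2w^3(w+s)}$ for $b>0$ and $-\frac{3s(3s-w)(2w+3s)}{8q^2w^3(w+s)}$ for $b<0$, both positive on $\mathcal U$ (using $0<w<s$, respectively $w<0<2w+s$) and matching the paper.
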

\begin{proof}
By choosing $\rho(u)=1/|u|$, $\mu=0$, $\nu=0$, it is easy to see that the function $\eta$ belongs to the general class of entropies of the system \eqref{eq: Main_system} along with the associated flux $\mathcal{F} $ as defined in \eqref{eq:entropyclass}. Therefore, we directly proceed to prove that the entropy $\eta$ is strictly convex for all $\mathbf{U}\in \mathcal{U}$. We separate the proof of convexity for $\eta$ into two different cases depending upon the sign of $b$.\\\\
\textbf{Case 1. $\mathbf{b>0}$}\\
For this case, we directly compute the Hessian matrix of $\eta$, which gives
    \begin{align*}
   H_{\eta}&= \left(\begin{array}{cccc}2\left(\frac{b^2}{(bf+gq)^3}+\frac{1}{f^3 b}\right)&\frac{bf-gq}{(bf+gq)^3}+\frac{1}{b^2f^2}&\frac{2bq}{(bf+gq)^3}&\frac{2bg}{(bf+gq)^3}\\ \frac{bf-gq}{(bf+gq)^3}+\frac{1}{b^2f^2}&2\left(\frac{f^2}{(bf+gq)^3}+\frac{1}{fb^3}\right)&\frac{2fq}{(bf+gq)^3}&\frac{2fg}{(bf+gq)^3}\\\frac{2bq}{(bf+gq)^3}&\frac{2fq}{(bf+gq)^3}&\frac{2q^2}{(bf+gq)^3}&\frac{gq-bf}{(bf+gq)^3}\\\frac{2bg}{(bf+gq)^3}&\frac{2fg}{(bf+gq)^3}&\frac{gq-bf}{(bf+gq)^3}&\frac{2g^2}{(bf+gq)^3}\end{array}\right),\\
   \end{align*}
Thus we have
\begin{align*}
   \mathbf{r_1}^\top H_{\eta}\mathbf{r_1}&= \frac{2f}{b(bf+gq)^2}+\frac{2}{b^3f},\\
   \mathbf{r_2}^\top H_{\eta}\mathbf{r_2}&=\frac{3(3gq-bf)\left(3g^2q^2+2bf(gq-fb)\right)}{8b^3f^3q^2(bf+gq)},\\
   \mathbf{r_3}^\top H_{\eta}\mathbf{r_3}&=\frac{2g}{q(bf+gq)^2},\\
   \mathbf{r_4}^\top H_{\eta}\mathbf{r_4}&=\frac{2g(3gq-bf)}{q(bf+gq)^3},
\end{align*}
   which are all positive for $\mathbf{U}\in \mathcal{U}$.\\\\
 \textbf{Case 2. $\mathbf{b<0}$}\\
For this case, we directly compute the Hessian matrix of $\eta$, which gives
   \begin{align*}
   \left(\begin{array}{cccc}
   \frac{2b^2}{(bf+gq)^3}-\frac{2}{bf^3}&\frac{bf-gq}{(bf+gq)^3}-\frac{1}{b^2f^2}&\frac{2bq}{(bf+gq)^3}&\frac{2bg}{(bf+gq)^3}\\ \frac{bf-gq}{(bf+gq)^3}-\frac{1}{b^2f^2}&\frac{2f^2}{(bf+gq)^3}-\frac{2}{b^3f}&\frac{2fq}{(bf+gq)^3}&\frac{2fg}{(bf+gq)^3}\\\frac{2bq}{(bf+gq)^3}&\frac{2fq}{(bf+gq)^3}&\frac{2q^2}{(bf+gq)^3}&\frac{gq-bf}{(bf+gq)^3}\\\frac{2bg}{(bf+gq)^3}&\frac{2fg}{(bf+gq)^3}&\frac{gq-bf}{(bf+gq)^3}&\frac{2g^2}{(bf+gq)^3}\\\end{array}\right),\\
    \end{align*}
Thus we have
\begin{align*}
   \mathbf{r_1}^\top H_{\eta}\mathbf{r_1}&= -\frac{2gq(2fb+gq)}{bf(bf+gq)^2},\\
   \mathbf{r_2}^\top H_{\eta}\mathbf{r_2}&=-\frac{3g(3gq-bf)(2bf+3gq)}{8b^3f^3q(bf+gq)},\\
   \mathbf{r_3}^\top H_{\eta}\mathbf{r_3}&=\frac{2g}{q(bf+gq)^2},\\
   \mathbf{r_4}^\top H_{\eta}\mathbf{r_4}&=\frac{2g(3gq-bf)}{q(bf+gq)^3},
\end{align*}
   which are all positive for $\mathbf{U}\in \mathcal{U}$.\\    
\end{proof}

We also define the entropy variables $\evar=\frac{\p\eta}{\p\mathbf{U}}$ for $b>0$ and $b<0$, respectively, which are given by
\begin{equation}\label{eq:envar}
    \evar = \left(\mp\frac{1}{b f^2}-\frac{b}{(bf+gq)^2},~\mp\frac{1}{b^2f} - \frac{f}{(bf+gq)^2},~- \frac{q}{(bf+gq)^2},~- \frac{g}{(bf+gq)^2}\right)^\top.
\end{equation}

\section{Constructing high-order entropy dissipative schemes}\label{se_methods}
In the following, we construct entropy-conservative or entropy-dissipative higher-order schemes by deriving numerical fluxes for \eqref{eq: Main_system}. For the sake of cleaner presentation, we present the calculation in this section for $b>0$. The case $b<0$ can be done analogouesly and we will later demonstrate also a numerical example for this. Here, we derive the entropy conservative fluxes using two different approaches. First, we use a Taylor approximation; afterward, we use an analytical approach. Before, we fix some notation and explain the basis on which these fluxes are derived. We also fix the notation for the space discretization. In particular, we introduce the entropy-dissipative FD scheme and DGSEM.

\subsection{Notation and background}
For a possibly discontinuous function $a\colon\Omega\to\mathbb{R}$ we define the jump via $\bbb{a}:=a^+-a^-$ and the average via $\ccc{a}:=\frac{1}{2}(a^++a^-)$, where $a^+$ and $a^-$ are the values at the discontinuity with $a^+=\lim_{x\to x_0^+}a(x)$ and $a^-=\lim_{x\to x_0^-}a(x)$. Further, let
\[
\ccc{a}_\text{log}:=\frac{\bbb{a}}{\bbb{\ln a}},
\]
be the logarithmic mean, which can be calculated in a numerically stable way following \cite{roe2009}. We refer also to \cite{kawai2025} for some recent approximations. Later on we need the identity.
$$
\jump{ab} = \ccc{a}\jump{b}+\ccc{b}\jump{a}.
$$

We now state Tadmor's approach \cite{tadmor2003entropy} for finding an entropy-conservative flux: Given a system of type \eqref{eq:conservation-laws}, a numerical flux is entropy-conservative if
\begin{equation}
0=\bbb{\evar}\cdot F^\text{num}-\bbb{\Psi(\con(\evar))}, \label{eq:tadmorsCondition}
\end{equation}
where $\evar={\left(\frac{\partial \eta}{\partial u_i}\right)}_{i=1}^n$, $\eta$ is an entropy and $\Psi=\evar\cdot F(\con)-\mathcal{F}$ is an entropy potential. It is given for the entropy-entropy flux pair \eqref{Eq:Ent-EntFl pair} by 
%\begin{itemize}
%\item For $b>0$: 
\begin{equation}\label{eq_potential_model}
\Psi= -2 + \frac{fb}{2(bf+gq)} + \frac{3}{2}\ln\left(bf(bf+gq)\right),
\end{equation}
%\item For $b<0$:
%\begin{equation*}
%\Psi= \frac{fb}{2(bf+gq)} + \frac{3}{2}\left\{\ln\left(bf+gq\right)-\ln\left(bf\right)\right\}
%\end{equation*}
%\end{itemize}
for which the derivation can be found in Appendix \ref{sec_analytical}.

For simplicity and later calculations, we introduce the following abbreviations: 
$
A:=fb,\quad B:=fb+gq,
$
then we have, 
$
\eta = \frac{1}{A}+\frac{1}{B}$ and $  \mathcal{F}=-\frac{1}{2}\frac{A}{B}-\frac{3}{2}\ln(AB).
$
Additionally, we have the following relations: 
\begin{align*}
v_1:=\frac{\partial\eta}{\partial f}&=-\frac{b}{(fb)^2}-\frac{b}{(fb+gq)^2}=-u_2(A^{-2}+B^{-2}),\\
v_2:=\frac{\partial\eta}{\partial b}&=-\frac{f}{(fb)^2}-\frac{f}{(fb+gq)^2}=-u_1(A^{-2}+B^{-2}),\\
v_3:=\frac{\partial\eta}{\partial g}&=-\frac{q}{(fb+gq)^2}=-u_4B^{-2},\\
v_4:=\frac{\partial\eta}{\partial q}&=-\frac{g}{(fb+gq)^2}=-u_3B^{-2}.
\end{align*}

Next, we briefly introduce the notation and discretization frameworks used throughout the remainder of this work, in particular the DGSEM and the entropy-dissipative finite-difference scheme.

\subsubsection*{Entropy Dissipative Discontinuous Galerkin Spectral Element Method}
 A central aspect is the use of entropy-conservative two-point fluxes within a flux-differencing framework to construct an entropy-dissipative (conservative) DG method.

We consider a one-dimensional spatial domain $\Omega=[x_L,x_R]$. For more details, we refer to the literature \cite{GASSNER201639,oeffner2023}. Then, the construction of the entropy dissipative DGSEM can be summarized as follows. 
We multiply the system \eqref{eq:conservation-laws} by a test function $\boldsymbol{\varphi}$ and integrate over the domain $\Omega$. Then, we divide $\Omega$ into non-overlapping elements $E_k=(x_k,x_{k+1})$, $k=1, \ldots, K$. Instead of working in each element separately, all calculations are done in the reference element $E_0 = (-1,1)$ and we create a mapping between the physical coordinates $x$ and computational coordinate $\chi$ on the reference element $E_0$ via
    \begin{equation*}
    \chi^k(x) = 2\left(\frac{x-x_{k}}{x_{k+1}-x_{k}}\right) - 1\in E_0,\, k=1,\ldots,K.
    \end{equation*}
    This introduces a Jacobian term on each element of the form
$
    \chi^k_x = \frac{2}{\Delta x_k}$ with $ \Delta x_k = x_{k+1} - x_k. 
$   From the mapping, we apply the chain rule to rewrite spatial derivatives from physical coordinates into the computational coordinates. The numerical solution and physical flux with nodal polynomials of degree $N$ are written in the Lagrange basis, e.g., 
    \begin{equation*}
    \con(x,t) \approx \sum_{j=0}^N\con_j(t)\ell_j(\chi),
    \end{equation*}
    where the interpolation nodes are taken to be $N+1$ Legendre-Gauss-Lobatto (LGL) nodes.  Now, we apply two times integration-by-parts, where, after the first round, the discontinuity at the element boundary is resolved by using a numerical flux in the spirit of FV methods. The second integration-by-parts yields the strong-form DGSEM. We now select the test function to be the Lagrange basis polynomials $\varphi = \ell_i(\chi)$, where $i=0,\ldots,N$ and approximate the integrals with LGL quadrature rules, where we additionally collocate the interpolation and quadrature nodes.   
    This finally yields the DGSEM. However, the discretization is not, in general, entropy conservative (dissipative). The key ingredient is now to replace the collocated flux in the volume with an entropy conservative, numerical two-point flux that extends a low-order entropy conservative FV flux to high-order~\cite{FISHER2013518} and results in a split-form discretization for the volume flux. 
For a uniform grid, the resulting semi-discrete nodal DGSEM takes the form
\begin{equation}\label{eq:semiDG}
\frac{\Delta x}{2}\dot{\con}_i + 2\sum_{j=0}^N\mathcal{D}_{ij}F^\text{num}(\con_i,\con_j) + \frac{\tau_i}{\omega_i}\left(F^\text{num, surf} - F\right) = 0,\quad\text{where}\quad i = 0,\ldots, N,
\end{equation}
with derivative matrix $\mathcal{D}$ approximating the first derivative $\frac{\partial}{\partial x}$, $\omega_i$ be the Gauss-Lobatto quadrature weights (which defines as well a diagonal norm mass matrix $\mathcal{M}=  \text{diag}(\omega_0,\ldots,\omega_N) $), and $\tau_i$ represent the contribution at the boundary. We have $$\tau_i=\begin{cases}
-1 \text{ for } i=0,\\
1 \; \text { for } i=N, \\
0 \;\text{ else}.
\end{cases}
$$

The choice of the symmetric, two-point volume flux $F^{\text{num}}(\con_i,\con_j)$ and the two-point surface flux $ F^{\text{num,surf}}$ plays a crucial role in ensuring the desired properties of the overall discretization. This has been demonstrated in several works, see, e.g., \cite{GASSNER201639}, building on the original analysis of Fisher et al. \cite{FISHER2013518}. In particular, it was shown that, under the appropriate choice of volume and surface fluxes, the resulting discretization inherits the desired stability and conservation properties. A summary of these results, adapted from \cite{oeffner2023, chen2017}, is given below:

\begin{thm}
    If $F^{\text{num}}(\con_i,\con_j) $ is consistent and symmetric, then \eqref{eq:semiDG} is conservative and high-order accurate. If we further assume that $F^{\text{num}}(\con_i,\con_j)$ is entropy conservative, then the semi-discrete scheme \eqref{eq:semiDG} is also entropy conservative within a single element. In addition, if the numerical flux $F^\text{num, surf}$ at the element interface is entropy conservative (dissipative), then the scheme is entropy conservative (dissipative). 
\end{thm}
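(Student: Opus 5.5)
The proof rests on the summation-by-parts (SBP) structure of the LGL collocation operators. With $\mathcal{M}=\text{diag}(\omega_0,\ldots,\omega_N)$ and $\mathcal{Q}:=\mathcal{M}\mathcal{D}$, exactness of LGL quadrature for polynomials of degree $2N-1$ gives
\begin{equation*}
\mathcal{Q}+\mathcal{Q}^\top=\mathcal{B}:=\text{diag}(-1,0,\ldots,0,1),
\end{equation*}
together with $\sum_j \mathcal{D}_{ij}=0$, since derivatives of constants vanish. I would split the argument into three parts: conservation, accuracy, and entropy conservation (or dissipation). Each part multiplies \eqref{eq:semiDG} by $\omega_i$ and then sums against either the constant vector or the entropy variables.

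\emph{Conservation.} Multiply \eqref{eq:semiDG} by $\omega_i$ and sum over $i$. The volume term becomes $2\sum_{i,j}\mathcal{Q}_{ij}F^\text{num}(\con_i,\con_j)$. Write $\mathcal{Q}=\tfrac12(\mathcal{Q}-\mathcal{Q}^\top)+\tfrac12\mathcal{B}$. Because $F^\text{num}$ is symmetric, the skew part contributes nothing to the double sum. The $\mathcal{B}$ part leaves $F^\text{num}(\con_N,\con_N)-F^\text{num}(\con_0,\con_0)=F(\con_N)-F(\con_0)$ by consistency. This boundary term cancels the $-\tau_iF$ pieces of the surface term, so only $F^\text{num,surf}$ at the two element ends survives. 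Summing over elements, these interface fluxes telescope, which gives conservation.

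\emph{Accuracy.} Use $\sum_j\mathcal{D}_{ij}=0$ to write $2\sum_j\mathcal{D}_{ij}F^\text{num}(\con_i,\con_j)=2\sum_j\mathcal{D}_{ij}\bigl(F^\text{num}(\con_i,\con_j)-F^\text{num}(\con_i,\con_i)\bigr)$. Then Taylor expand in $\con_j$ about $\con_i$. By symmetry and consistency, $\partial_2F^\text{num}(u,u)=\tfrac12F'(u)$. Since $\mathcal{D}$ is exact for polynomials of degree $N$, the first-order term reproduces $\partial_x F$ at the nodes to order $N$; this is Fisher et al.'s argument \cite{FISHER2013518}, which I would cite rather than redo.

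\emph{Entropy.} Multiply by $\omega_i\evar_i^\top$ and sum; the time-derivative term gives $\frac{\Delta x}{2}\frac{d}{dt}\sum_i\omega_i\eta(\con_i)$. This is the step I expect to be the main obstacle. The volume term is $2\sum_{i,j}\mathcal{Q}_{ij}\evar_i\cdot F^\text{num}_{ij}$. Splitting $\mathcal{Q}$ as before, the skew part gives $\sum_{i,j}\mathcal{Q}_{ij}(\evar_i-\evar_j)\cdot F^\text{num}_{ij}$. Tadmor's condition \eqref{eq:tadmorsCondition} turns this into $\sum_{i,j}\mathcal{Q}_{ij}(\Psi_i-\Psi_j)$. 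Using the row sums $\sum_j\mathcal{Q}_{ij}=0$ and the column sums $\sum_i\mathcal{Q}_{ij}=\mathcal{B}_{jj}$, this collapses to a boundary term in $\Psi$. Combined with the $\mathcal{B}$ part, the volume contribution becomes $\evar_N\cdot F_N-\evar_0\cdot F_0-(\Psi_N-\Psi_0)$. Since $\Psi=\evar\cdot F-\mathcal{F}$, this equals $\mathcal{F}_N-\mathcal{F}_0$. So within one element the entropy changes only through interface terms, which is the intra-element entropy conservation claimed.

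Finally, add the surface terms and sum over elements. At each interface with left state $-$ and right state $+$, the leftover is $\bbb{\Psi}-\bbb{\evar}\cdot F^\text{num,surf}$. This is zero if $F^\text{num,surf}$ satisfies \eqref{eq:tadmorsCondition}. If $F^\text{num,surf}$ is entropy dissipative, i.e.\ $\bbb{\evar}\cdot F^\text{num,surf}\ge\bbb{\Psi}$, the leftover is nonpositive, and the total entropy is nonincreasing up to the domain boundary fluxes. The care needed throughout is in the signs from $\tau_i$ and the orientation of the jumps, which I would check explicitly at a single interface.
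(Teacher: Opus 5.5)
Your proof follows the standard summation-by-parts argument of \cite{FISHER2013518,GASSNER201639,chen2017}. The paper itself gives no proof; it quotes the theorem from \cite{oeffner2023,chen2017}. Everything up to the interface step is correct: conservation via symmetry and the skew part of $\mathcal{Q}$, and the reduction of the volume term to $\mathcal{F}_N-\mathcal{F}_0$ via Tadmor's condition and the row/column sums of $\mathcal{Q}$.

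The final step, however, has the inequality reversed. The leftover $\bbb{\Psi}-\bbb{\evar}\cdot F^{\text{num,surf}}$ sits on the same side as the time derivative. Per element you have
\begin{equation*}
\frac{\Delta x}{2}\frac{d}{dt}\sum_{i}\omega_i\eta(\con_i)+\bigl(\evar_N\cdot F^{\text{num,surf}}_{k+\frac12}-\Psi_N\bigr)-\bigl(\evar_0\cdot F^{\text{num,surf}}_{k-\frac12}-\Psi_0\bigr)=0 .
\end{equation*}
After summing over elements, the total entropy $S$ therefore satisfies $\frac{\Delta x}{2}\frac{dS}{dt}=\sum_{\text{interfaces}}\bigl(\bbb{\evar}\cdot F^{\text{num,surf}}-\bbb{\Psi}\bigr)$ plus domain-boundary terms. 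A nonpositive leftover thus makes $S$ increase. The correct dissipativity condition is $\bbb{\evar}\cdot F^{\text{num,surf}}\le\bbb{\Psi}$, which is the paper's own convention (cf.\ \eqref{eq:entropy-stable-flux}, where $\bbb{\evar}\cdot\F^{\mathrm{es}}-\bbb{\Psi}=-\frac12\bbb{\evar}^{T}\mathcal{D}_{i+\frac12}\bbb{\evar}\le0$).

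Your two sign slips cancel in the last sentence, but the hypothesis you would be verifying is the wrong one. The paper's dissipative fluxes satisfy the opposite inequality. Conversely, an anti-diffusive flux such as $\F^{\mathrm{num}}+\frac12\mathcal{D}_{i+\frac12}\bbb{\evar}$ satisfies your criterion $\bbb{\evar}\cdot F\ge\bbb{\Psi}$ while producing entropy. With the criterion corrected to $\le$, the leftover is nonnegative and the conclusion follows as you intend. This is exactly the single-interface sign check you said you would carry out.
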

Therefore, to construct entropy dissipative DG schemes within the framework, the development of entropy conservative two-point fluxes is essential.

\subsubsection*{Entropy Dissipative Finite Difference Scheme}

Next, we briefly introduce the entropy-dissipative finite-difference discretization used in the following. We consider the one-dimensional spatial domain $\Omega=[x_L,x_R]$, which is discretized by a uniform grid with grid points
$
x_i=x_L+i\Delta x,\; i=0,\ldots,N.
$
The semi-discrete FD approximation is given by
\begin{equation}\label{eq_semi}
\frac{\mathrm{d}}{\mathrm{d}t}\con_i(t)
+
\frac{
\F^{\mathrm{num}}_{i+\frac12}(t)-\F^{\mathrm{num}}_{i-\frac12}(t)
}{\Delta x}
=0,
\end{equation}
where $\F^{\mathrm{num}}_{i+\frac12}$ denotes a numerical flux consistent with the physical flux $F(\con)$.
Following \cite{tadmor2003entropy}, we have the following result.

\begin{thm}\label{thm:entropy-conservative-fd}
Assume that the numerical flux
$
\F^{\mathrm{num}}_{i+\frac12}
=
\F^{\mathrm{num}}(\con_i,\con_{i+1})
$
is consistent with the physical flux $F(\con)$ and satisfies Tadmor's entropy-conservation condition
$
\bbb{\evar}_{i+\frac12}
\cdot
\F^{\mathrm{num}}_{i+\frac12}
=
\bbb{\Psi}_{i+\frac12}.
$
Then the semi-discrete FD scheme
\eqref{eq_semi} is entropy conservative, i.e.,
\begin{equation}\label{eq:entropy-equality-fd}
\frac{\mathrm d}{\mathrm dt}\eta(\con_i)
+
\frac{
\widehat{\mathcal{F}}_{i+\frac12}
-
\widehat{\mathcal{F}}_{i-\frac12}
}{\Delta x}
=0,
\end{equation}
where the numerical entropy flux is defined by
\begin{equation}\label{eq:numerical-entropy-flux-fd}
\widehat{\mathcal{F}}_{i+\frac12}
=
\ccc{\evar}_{i+\frac12}
\cdot
\F^{\mathrm{num}}_{i+\frac12}
-
\ccc{\Psi}_{i+\frac12}.
\end{equation}
\end{thm}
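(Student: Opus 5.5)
The plan is to take the Euclidean inner product of the semi-discrete scheme \eqref{eq_semi} with the entropy variables $\evar_i=\evar(\con_i)$ and use the chain rule. For smooth-in-time $\con_i(t)$ we have $\frac{\mathrm d}{\mathrm dt}\eta(\con_i)=\evar_i\cdot\frac{\mathrm d}{\mathrm dt}\con_i$. This gives
\begin{equation*}
\frac{\mathrm d}{\mathrm dt}\eta(\con_i)=-\frac{1}{\Delta x}\left(\evar_i\cdot\F^{\mathrm{num}}_{i+\frac12}-\evar_i\cdot\F^{\mathrm{num}}_{i-\frac12}\right).
\end{equation*}
It then suffices to show that the bracket equals $\widehat{\mathcal F}_{i+\frac12}-\widehat{\mathcal F}_{i-\frac12}$ with $\widehat{\mathcal F}$ as in \eqref{eq:numerical-entropy-flux-fd}.

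The key algebraic step is to rewrite $\evar_i$ relative to each interface. At $x_{i+\frac12}$ the node $i$ is the left state, so $\evar_i=\ccc{\evar}_{i+\frac12}-\tfrac12\bbb{\evar}_{i+\frac12}$. At $x_{i-\frac12}$ it is the right state, so $\evar_i=\ccc{\evar}_{i-\frac12}+\tfrac12\bbb{\evar}_{i-\frac12}$. The same splittings hold for $\Psi_i$.

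Substituting and using Tadmor's condition $\bbb{\evar}\cdot\F^{\mathrm{num}}=\bbb{\Psi}$ at both interfaces, we get
\begin{equation*}
\evar_i\cdot\F^{\mathrm{num}}_{i+\frac12}=\ccc{\evar}_{i+\frac12}\cdot\F^{\mathrm{num}}_{i+\frac12}-\tfrac12\bbb{\Psi}_{i+\frac12}=\widehat{\mathcal F}_{i+\frac12}+\ccc{\Psi}_{i+\frac12}-\tfrac12\bbb{\Psi}_{i+\frac12}=\widehat{\mathcal F}_{i+\frac12}+\Psi_i,
\end{equation*}
and, analogously,
\begin{equation*}
\evar_i\cdot\F^{\mathrm{num}}_{i-\frac12}=\widehat{\mathcal F}_{i-\frac12}+\ccc{\Psi}_{i-\frac12}+\tfrac12\bbb{\Psi}_{i-\frac12}=\widehat{\mathcal F}_{i-\frac12}+\Psi_i.
\end{equation*}
Subtracting, the $\Psi_i$ terms cancel and \eqref{eq:entropy-equality-fd} follows.

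Finally, I would check that $\widehat{\mathcal F}$ is a consistent numerical entropy flux. For equal states, consistency of $\F^{\mathrm{num}}$ gives $\widehat{\mathcal F}(\con,\con)=\evar\cdot F(\con)-\Psi=\mathcal F$, by the definition of the entropy potential. This makes \eqref{eq:entropy-equality-fd} a genuine conservative discretization of the entropy equation.

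I expect no real obstacle here. The only point needing care is the sign convention of the jump $\bbb{a}=a^+-a^-$: at $x_{i+\frac12}$ the node $i$ is the minus side, while at $x_{i-\frac12}$ it is the plus side. Once that orientation is fixed consistently, the cancellation of the $\Psi_i$ terms is immediate.
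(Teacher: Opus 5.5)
Your proof is correct. Contracting \eqref{eq_semi} with $\evar_i$, splitting $\evar_i$ and $\Psi_i$ into interface averages and jumps with the correct orientation at $x_{i\pm\frac12}$, and then applying Tadmor's condition is exactly the classical argument of Tadmor; the paper cites that argument rather than writing out its own proof, and your consistency check $\widehat{\mathcal F}(\con,\con)=\mathcal F$ via $\Psi=\evar\cdot F-\mathcal F$ is also right.
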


However, the entropy-conservative fluxes derived from the Tadmor condition are only second-order accurate. Following~\cite{leFloch2002}, for any positive integer $p$, we can construct accurate fluxes of order $2p$ using second-order fluxes. For $p=2$, the fourth-order entropy-conservative flux $\tilde{\F}_{i+\frac{1}{2}}^4$ is given as follows:
    \begin{align}
\tilde{\F}_{i+\frac{1}{2}}^{\mathrm{num},4}&=\frac{4}{3}\,\F^{\mathrm{num}}(\con_{i},\con_{i+1})-\frac{1}{6} \bigg(\F^{\mathrm{num}} (\con_{i-1},\con_{i+1})+
	\F^{\mathrm{num}}(\con_{i},\con_{i+2}) \bigg)\label{eq:4thorder_numflux_x}
\end{align}
%Similarly, the fourth-order entropy-conservative flux $\tilde{\F^4}_{y,i,j+\frac{1}{2}}$ in $y$-direction as follows: \PO{We do not two-d currently. or what is the reason for this. }
%\begin{align}
%		\tilde{\F}_{y,i,j+\frac{1}{2}}^4&=\frac{4}{3}\tilde{\F}_{y}(\con_{i,j},\con_{i,j+1})-\frac{1}{6} \bigg( \tilde{\F}_{y} (\con_{i,j-1},\con_{i,j+1})+
%		\tilde{\F}_{y}(\con_{i,j},\con_{i,j+2}) \bigg).\label{eq:4thorder_numflux_y}
%\end{align}
%By replacing the second-order fluxes with the fourth-order fluxes and using fourth-order central difference approximations to approximate $\left(\frac{\p \con}{\p x}\right)_{i,j}$ and $\left(\frac{\p \con}{\p y}\right)_{i,j}$ given in~\eqref{eq:semi-discrete_fd}, we obtain fourth-order entropy-conservative schemes. According to~\cite{leFloch2002}, only even-order accurate fluxes can be constructed. Therefore, to construct an $m^{th}$-order accurate scheme, we use a $(m+1)^{th}$-order accurate flux, where $m$ is an odd integer. \PO{this can be changed! Indeed but it is only since it uses a central stenicl, we can also construct not ecen order by weighted stencisl... but this is a different topic}
Therefore, also in such context, higher-order methods can be constructed. 

Entropy-conservative schemes, while satisfying a discrete entropy equality, generally
lack the dissipation required to control oscillations in the presence of shocks. To obtain
an entropy-dissipative scheme, we therefore augment the
entropy-conservative flux by a suitable dissipative term. Following
\cite{tadmor2003entropy} again, we consider the modified numerical flux
\begin{equation}\label{eq:entropy-stable-flux}
\F^{\mathrm{es}}_{i+\frac12}
=
\F^{\mathrm{num}}_{i+\frac12}
-
\frac{1}{2}
\mathcal{D}_{i+\frac12}
\bbb{\evar}_{i+\frac12},
\end{equation}
where $\mathcal{D}_{i+\frac12}$ is a symmetric positive semidefinite matrix.
We obtain therefore the estimation
\begin{align}
\bbb{\evar}_{i+\frac12}
\cdot
\F^{\mathrm{es}}_{i+\frac12}
-
\bbb{\Psi}_{i+\frac12}
&=
-\frac{1}{2}
\bbb{\evar}_{i+\frac12}^{T}
\mathcal{D}_{i+\frac12}
\bbb{\evar}_{i+\frac12}
\leq 0.
\end{align}
Consequently, the resulting semi-discrete scheme is entropy dissipative. 

The entropy-stable flux contains the jump of the entropy variables and is therefore, in its basic form, only first-order accurate. To obtain higher-order entropy-dissipative schemes, we follow the sign-preserving reconstruction approach of \cite{fjordholm2012arbitrarily, fjordholm2013eno} as it is also used in \cite{singh2025entropy}. The key idea is to reconstruct suitably scaled entropy variables rather than the entropy variables directly. More precisely, we introduce entropy-scaled right eigenvectors of the symmetrized quasilinear form and use them to define the scaled entropy variables. The construction and scaling of these eigenvectors are given in Appendix~\ref{Eiegn_vector_for_con}. The same entropy-scaled eigenvectors are used to construct the diffusion matrix, which takes the form
\begin{equation}
\mathcal{D}_{i+\frac12}
=
\widetilde{R}_{i+\frac12}
\left|\Lambda_{i+\frac12}\right|
\widetilde{R}_{i+\frac12}^{T},
\end{equation}
where $\left|\Lambda_{i+\frac12}\right|$ contains the absolute values of the characteristic speeds. We then apply a sign-preserving higher-order reconstruction to the scaled entropy variables and transform the reconstructed values back to entropy variables. Denoting the resulting reconstructed jump by $\jump{\widehat{\evar}}^{\,k}_{i+\frac12}$, the higher-order entropy-stable flux is defined as
\begin{equation}
\F^{\mathrm{es},k}_{i+\frac12}
=
\F^{\mathrm{ec},2p}_{i+\frac12}
-
\frac12
\mathcal{D}_{i+\frac12}
\jump{\widehat{\evar}}^{\,k}_{i+\frac12}.
\end{equation}
Here, $\F^{\mathrm{ec},2p}$ denotes the corresponding higher-order entropy-conservative flux and the sign-preserving property of the reconstruction ensures that the dissipative contribution retains the correct sign and thus preserves the entropy-stability property. 
Similar to DGSEM, the key ingredient to develop an entropy dissipative high-order scheme is the existence of entropy conservative two point fluxes, which we will develop in the following using two slightly different approaches. 

\subsection{Entropy conservative flux using Taylor expansion}
\label{Sec_Tadmor}
%==============================================================================
%==============================================================================
The aim is to construct an entropy-conservative numerical flux satisfying Tadmor's condition \eqref{eq:tadmorsCondition}. For notational simplicity, we suppress the mesh indices and denote the left and right states by $\con_l$ and $\con_r$, respectively. When $a_l \approx a_r$, we employ the Taylor expansion-based approximation of the logarithmic mean proposed in \cite{roe2009}. Following the approach from  \cite{chandrashekar2013kinetic,ranocha2018}, we express the jumps in the entropy variables $\evar$ in terms of the jumps of $b$, $f$, $g$, and $q$ as

\begin{align*}
	[\![ \evar]\!]=\begin{pmatrix*}
		\left[\!\!\left[v_1\right]\!\!\right]\\ 
		\left[\!\!\left[v_2\right]\!\!\right]\\
		\left[\!\!\left[v_3\right]\!\!\right]\\
		\left[\!\!\left[v_4\right]\!\!\right]\\
	\end{pmatrix*} =\begin{pmatrix*}
		\left[\!\!\left[\mp\frac{1}{b f^2}-\frac{b}{(bf+gq)^2}\right]\!\!\right]\\[4pt]
		\left[\!\!\left[\mp\frac{1}{b^2f} - \frac{f}{(bf+gq)^2}\right]\!\!\right]\\[4pt]
		\left[\!\!\left[- \frac{q}{(bf+gq)^2}\right]\!\!\right]\\[4pt]
		\left[\!\!\left[- \frac{g}{(bf+gq)^2}\right]\!\!\right]\\
	\end{pmatrix*}
\end{align*}
Next, we expend the jumps of the variables and obtain the following expressions: 
\begin{align*}
	\left[\!\!\left[v_1\right]\!\!\right] &= \pm\frac{\ccc{\left(\frac{1}{b f^2}\right)}}{\ccc{b f^2}} \left\{2 \ccc{b}\ccc{f}[\![f]\!] + \ccc{f^2}[\![b]\!]\right\} \\ &-  \frac{[\![b]\!] - 2~ \ccc{(bf+gq)} ~\ccc{\left(\frac{b}{(bf+gq)^2}\right)}\left\{\ccc{b}[\![f]\!] + \ccc{f}[\![b]\!] + \ccc{g}[\![q]\!] + \ccc{q}[\![g]\!]\right\}}{\ccc{(bf+gq)^2}}\\
	\left[\!\!\left[v_2\right]\!\!\right] &= \pm\frac{\ccc{\left(\frac{1}{b^2 f}\right)}}{\ccc{b^2 f}} \left\{2 \ccc{b}\ccc{f}[\![b]\!] + \ccc{b^2}[\![f]\!]\right\} \\ &- \frac{[\![f]\!] - 2~ \ccc{(bf+gq)} ~\ccc{\left(\frac{f}{(bf+gq)^2}\right)}\left\{\ccc{b}[\![f]\!] + \ccc{f}[\![b]\!] + \ccc{g}[\![q]\!] + \ccc{q}[\![g]\!]\right\}}{\ccc{(bf+gq)^2}}\\
	\left[\!\!\left[v_3\right]\!\!\right] &= - \frac{[\![q]\!] - 2~ \ccc{(bf+gq)} ~\ccc{\left(\frac{q}{(bf+gq)^2}\right)}\left\{\ccc{b}[\![f]\!] + \ccc{f}[\![b]\!] + \ccc{g}[\![q]\!] + \ccc{q}[\![g]\!]\right\}}{\ccc{(bf+gq)^2}}\\
	\left[\!\!\left[v_4\right]\!\!\right] &= - \frac{[\![g]\!] - 2~ \ccc{(bf+gq)} ~\ccc{\left(\frac{g}{(bf+gq)^2}\right)}\left\{\ccc{b}[\![f]\!] + \ccc{f}[\![b]\!] + \ccc{g}[\![q]\!] + \ccc{q}[\![g]\!]\right\}}{\ccc{(bf+gq)^2}}.\\
\end{align*}
Let us consider the numerical flux $F^\text{num}= \left[ \tilde{F}^{(1)},\tilde{F}^{(2)}, \tilde{F}^{(3)}, \tilde{F}^{(4)}\right]^{\top}$
Then, we have 
\begin{equation}\label{eq_jump}
	\jump{\evar}\cdot F^\text{num} = \sum_{i=1}^{4}\jump{v_i}\cdot \tilde{F}^{(i)} = \jump{\Psi}.
\end{equation}
We can sort the jump expansions of \eqref{eq_jump} in terms of jumps in the conservative variables, e.g. $\jump{\con}=
(\bbb{f},\bbb{b},\bbb{g},\bbb{q})^T$ as well as 
the jump in the potential. By elemenary calculation, we obtain a  linear system of equations to calculate the entropy conservative flux components inside $F^{\textrm{num}} $. 
The resulting system can be written in matrix-vector from 
$\mathbf{A}F^{\textrm{num}}=\mathbf{b}$, and the solution of this linear system is given by $F^{\textrm{num}}=\mathbf{A^{-1}b}$. The details and the entries can be found in Appendix \ref{se_Taylor}. Please note that the resulting $4\times 4$ system has to be solved in each time-step to calculate the entropy conservative numerical two-point flux which is also done numerically inside our algorithm.   

%==============================================================================
%==============================================================================
\subsection{Analytical derivation of numerical flux}\label{sec_Analytical}
In the next part, we will derive an analytical derivation of the numerical flux without using Taylor series expansion for simplicity. We work directly with the flux formulation.

We rewrite \eqref{eq:tadmorsCondition} in terms of the conservative variables $\con$. Using the chain rule to express the averages and jumps in terms of the conservative variables, we factor out the jumps of all conservative variables. Exploiting the linear independence of the jumps of the components of $\con$, i.e.,
\[
\left(0=\sum_{i=1}^4\alpha_i\bbb{u_i}\right) \quad \Longrightarrow \quad \forall i\in\{1,2,3,4\}:\, \alpha_i=0.
\]
We thus obtain the following linear system of equations, whose solution determines the entropy-conservative numerical flux: 
\begin{equation}\label{eq_system}
\mathcal{A}F^\text{num}=c,
\end{equation}
with
\[
\mathcal{A}=\begin{pmatrix}\ccc{u_2}\ccc{\tilde{C}}&\ccc{u_1}\ccc{\tilde{C}}-\frac{\ccc{\tilde{D}}}{2\ccc{u_2}}&\ccc{u_4}\ccc{\tilde{B}}&\ccc{u_3}\ccc{\tilde{B}}\\
\ccc{u_2}\ccc{\tilde{C}}-\frac{\ccc{\tilde{D}}}{2\ccc{u_1}}&\ccc{u_1}\ccc{\tilde{C}}&\ccc{u_4}\ccc{\tilde{B}}&\ccc{u_3}\ccc{\tilde{B}}\\
\ccc{u_2}\ccc{\tilde{B}}&\ccc{u_1}\ccc{\tilde{B}}&\ccc{u_4}\ccc{\tilde{B}}&\ccc{u_3}\ccc{\tilde{B}}-\frac{\ccc{B^{-2}}}{2\ccc{u_4}}\\
\ccc{u_2}\ccc{\tilde{B}}&\ccc{u_1}\ccc{\tilde{B}}&\ccc{u_4}\ccc{\tilde{B}}-\frac{\ccc{B^{-2}}}{2\ccc{u_3}}&\ccc{u_3}\ccc{\tilde{B}}\end{pmatrix}
\]
and
\[
c=\frac{1}{4}\begin{pmatrix}
\ccc{B^{-1}}-\ccc{A}\frac{\ccc{B^{-1}}}{\ccc{B}}-3(\ccc{A}+\ccc{B})\ccc{AB}_{\log}^{-1}\\
\ccc{B^{-1}}-\ccc{A}\frac{\ccc{B^{-1}}}{\ccc{B}}-3(\ccc{A}+\ccc{B})\ccc{AB}_{\text{log}}^{-1}\\
-\ccc{A}\frac{\ccc{B^{-1}}}{\ccc{B}}-3\ccc{A}\ccc{AB}_{\log}^{-1}\\
-\ccc{A}\frac{\ccc{B^{-1}}}{\ccc{B}}-3\ccc{A}\ccc{AB}_{\log}^{-1}
\end{pmatrix},
\]
where $\ccc{\tilde{C}}:=\ccc{\tilde{A}}+\ccc{\tilde{B}}$, $\ccc{\tilde{D}}:=\ccc{A^{-2}+B^{-2}}$, $\ccc{\tilde{X}}=\ccc{X}\frac{\ccc{X^{-2}}}{\ccc{X^2}}$, and $X\in\{A,B\}$.  %and $\ccc{u}_{\log}:=\frac{u^+-u^-}{\log(u)^+-\log(u)^-}$, which can be evaluated in a numerical stable way following the approach from \cite{TODO}.
The details of the derivation are provided in Appendix~\ref{sec_analytical}.
The system $\mathcal{A}F^{\text{num}}=c$ with $F^{\text{num}}=(F_1,F_2,F_3,F_4)^T$ can be simplified to
\begin{equation}\label{eq:f1}
\ccc{u_1}F_2=\ccc{u_2}F_1,\quad \ccc{u_3}F_4=\ccc{u_4}F_3
\end{equation}
and
\begin{equation}\label{eq:f2}
\tilde{\mathcal{A}}\begin{pmatrix}F_1\\F_3\end{pmatrix}:=\begin{pmatrix} 2\ccc{u_2}\ccc{\tilde{C}}-\frac{\ccc{\tilde{D}}}{2\ccc{u_1}} & 2\ccc{u_4}\ccc{\tilde{B}} \\ 2\ccc{u_2}\ccc{\tilde{B}} & 2\ccc{u_4}\ccc{\tilde{B}}-\frac{\ccc{B^{-2}}}{2\ccc{u_3}}\end{pmatrix}\begin{pmatrix} F_1\\F_3\end{pmatrix}=\begin{pmatrix} c_1 \\ c_3\end{pmatrix}.
\end{equation}
If $\det(\tilde{\mathcal{A}})\neq0$, the system \eqref{eq:f2} (and \eqref{eq:f1}) can be uniquely solved via
\begin{align*}
F_1&=\frac{1}{\det(\tilde{\mathcal{A}})}\big((2\ccc{u_4}\ccc{\tilde{B}}-\frac{\ccc{B^{-2}}}{2\ccc{u_3}})c_1-2\ccc{u_4}\ccc{\tilde{B}}c_3\big), & F_2&=\frac{\ccc{u_2}}{\ccc{u_1}}F_1.\\
F_3&=\frac{1}{\det(\tilde{\mathcal{A}})}\big((2\ccc{u_2}\ccc{\tilde{C}}-\frac{\ccc{\tilde{D}}}{2\ccc{u_1}})c_3-2\ccc{u_2}\ccc{\tilde{B}}c_1\big),
& F_4&=\frac{\ccc{u_4}}{\ccc{u_3}}F_3.
\end{align*}
This gives us an analytical expression of the solution of the linear system \eqref{eq:f2} and thus an entropy-conservative two-point flux. 
We have now two different linear systems to solve to derive/determine the entropy-conservative two-point fluxes. %Applying these fluxes 
\section{Numerics}\label{se_numerics}

In the following, we test and investigate the properties of the derived schemes. In particular, we focus on the performance of the two derived numerical fluxes when used within the entropy-dissipative finite-difference (FD) scheme and the DGSEM framework.\\
We consider one- and two-dimensional numerical experiments\footnote{A similar construction procedure can be followed to derive the corresponding two-dimensional fluxes. These fluxes are also provided in the accompanying repository.}.
For the implementation of the DGSEM, we use the \texttt{Trixi.jl} framework \cite{ranocha2021adaptive}. For non-smooth test cases, we apply the subcell shock-capturing procedure introduced in \cite{hennemann2021provably}. Within the flux-differencing formulation, we use our two derived entropy-conservative fluxes as volume fluxes. For the surface flux, we consider either a local Lax--Friedrichs flux or the corresponding entropy-conservative two-point flux.
For the entropy-dissipative FD scheme, we extend the approach used in \cite{singh2025entropy} by incorporating the entropy-conservative two-point fluxes derived in this work. In addition, we use the Roe-type dissipation term described in Section~\ref{se_methods}.
The time discretization is usually performed using a strong-stability-preserving Runge-Kutta (SSP RK) scheme. The particular choice of the time integration method will be specified for the individual numerical experiments.\\
For the numerical simulations, we focus here on four test cases. We first consider a smooth problem and two Riemann problems in one spatial dimension. Finally, we consider a two-dimensional problem from \cite{biswas_2026_Riemann}, which demonstrates the overall performance of the derived numerical fluxes and their application within the two considered discretization frameworks.

\subsection{Smooth traveling wave solutions - convergence test}\label{Pro:Smooth_1d}

To demonstrate the high-order accuracy of our scheme, we consider a smooth traveling wave  for the system \eqref{eq: Main_system}. The exact evaluation are described in the work \cite{barthwal2025generalized}. Here, we give only the basic setting and  focus on  the following $2\pi$-periodic solution of the system \eqref{eq: Main_system} 
\begin{align}\label{eq: travelling_wave}
    f(x, t)&= 2+\sin(x+t),\, b(x, t)=\dfrac{2}{f(x, t)}, \, g(x, t)= 2, \, q(x, t)= 1.
\end{align}
The domain is therefore given by $[0,2\pi]$ with periodic boundary conditions and the initial data are also described in \eqref{eq: travelling_wave} (with $t=0$). \\

\textbf{DGSEM:} We compare the numerical result for varying mesh size for the spatial domain to the solution at time $t=3$ also given by \eqref{eq: travelling_wave}. In order to measure only the effects of different mesh sizes through the spatial discretization on the error, we use a high-order time integrator to minimize the error in time, in this case, a $3$rd-order strong stability preserving Runge-Kutta scheme with adaptive time step size with 9 stages is applied \cite{rackauckas2017differentialequations}. We use for the  surface fluxes the entropy conservative two-point fluxes. 

\begin{figure}[ht]
\begin{center}
\includegraphics[width=0.49\textwidth]{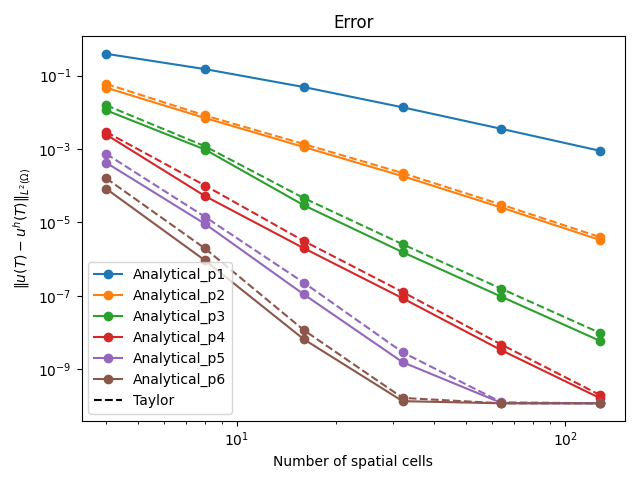}\hfill\includegraphics[width=0.49\textwidth]{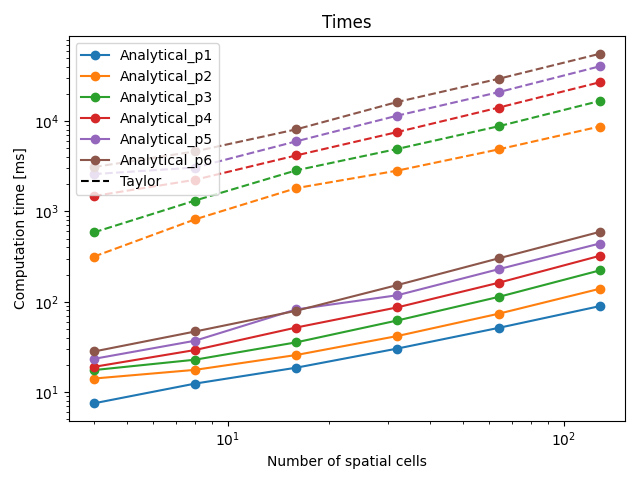}
\caption{The discrete $L^2(\Omega)$-error between $\con^h(T)$ and $\con (T)$ (left) and the computation time (right), both plotted in a log-log graph for different polynomial degress over an exponential decreasing spatial step size $h$.}\label{fig:errorTimes}
\end{center}
\end{figure}

\begin{figure}[!htbp]
	\begin{center}
		\subfigure[CPU time versus $L_1$ error using second order ED-FD scheme]{\includegraphics[width=0.3\textwidth]{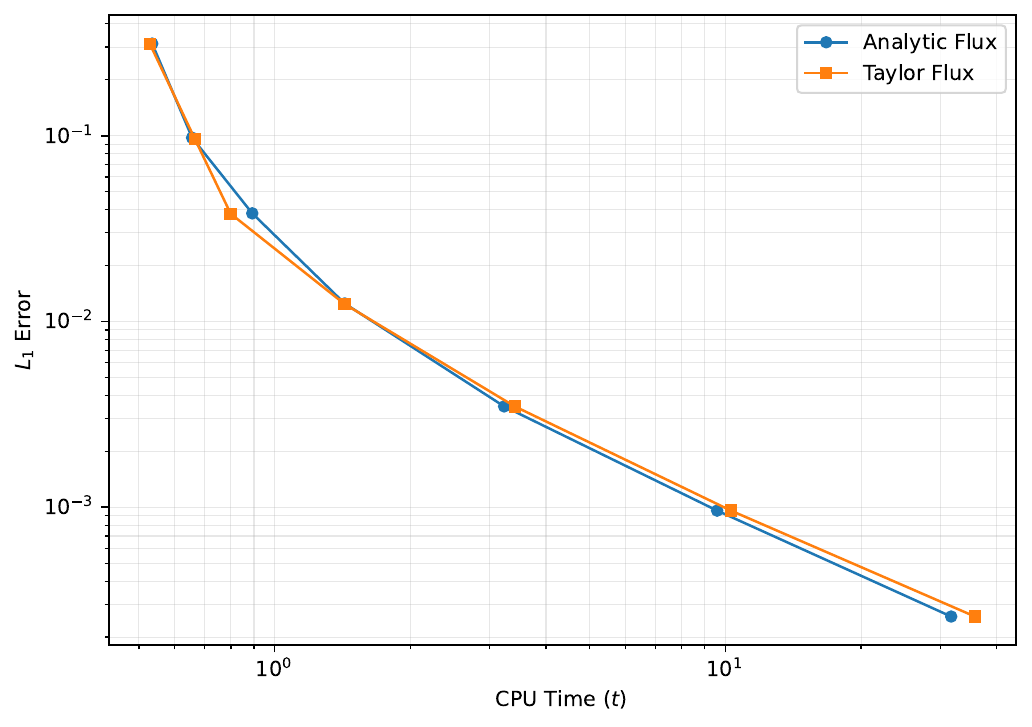} \label{fig:CPU_O2}}~
		\subfigure[CPU time versus $L_1$ error using third order ED-FD scheme]{\includegraphics[width=0.3\textwidth]{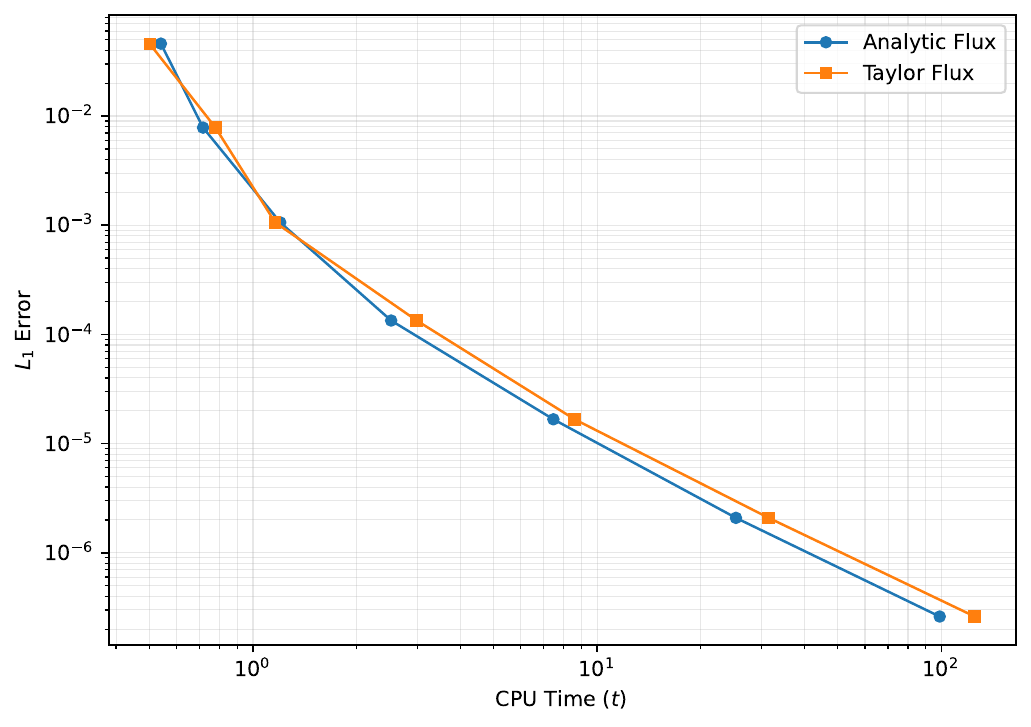} \label{fig:CPU_O3}}~
        \subfigure[CPU time versus $L_1$ error using fourth order ED-FD scheme]{\includegraphics[width=0.3\textwidth]{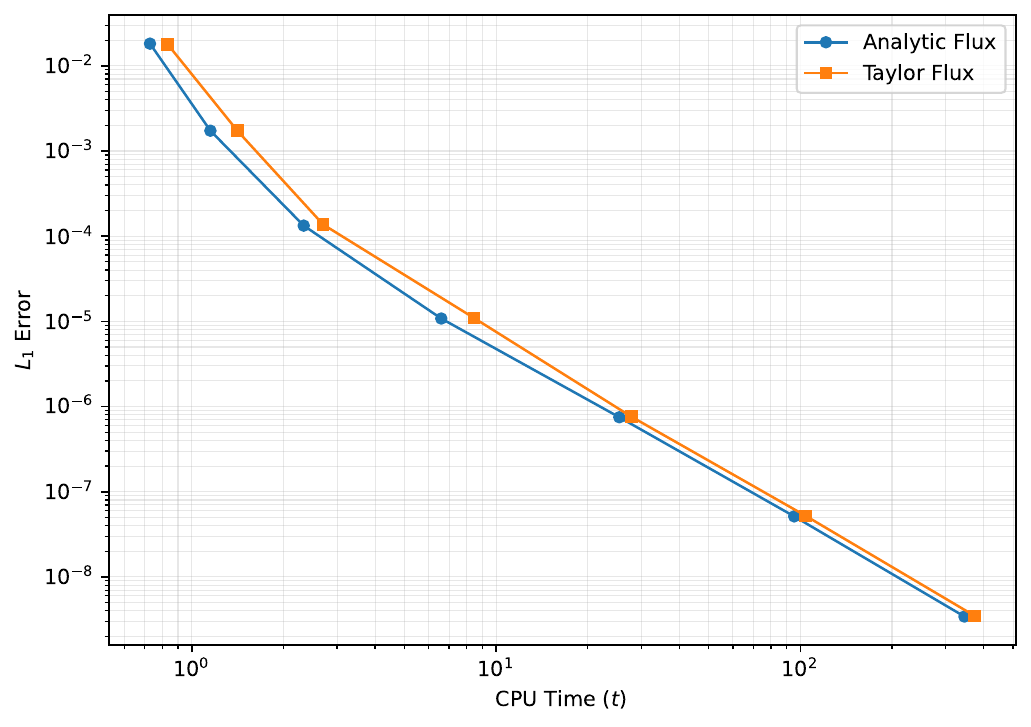} \label{fig:CPU_O4}}~
		\caption{\textbf{\nameref{Pro:Smooth_1d}}: Time vs accuracy comparison plots using the entropy-dissipative FD schemes with analytical and Taylor fluxes.}
		\label{fig:Cpu_VS_L1_error}
	\end{center}    
\end{figure}

The results for the discrete $L^2$-error and the computation time can be seen in Fig.~\ref{fig:errorTimes}. One can see that the error decreases with order $h^{p+1}$, where $h$ is the spatial step size and $p$ is the polynomial degree. One can also recognize that the errors calculated with the flux based on the Taylor expansion (denoted by Taylor in the Figure with dotted line) has a slightly larger error constant than the analytically derived flux. While the difference in error can be neglected for the two different fluxes, the computation time differs significantly: the analytical approach is at least one magnitude of order (and up to two orders of magnitude) faster than the Taylor approach. For every application of the flux, a $4\times 4$-matrix has to be inverted for the Taylor approach. For the analytical approach, only floating-point operations have to be performed that are much less computational heavy than inverting a matrix.
Additionally, we mark that the $p=1$ DGSEM scheme with the approximated flux is already breaking for this test case.

\textbf{Entropy dissipative FD: }
In Tables \ref{tab:O2_f}- \ref{tab:O4_f}, we report the errors for $f$ (similar errors behaviour can be derived for the other conserved quantities) obtained with the second- to fourth-order entropy-dissipative FD schemes, combined with SSP Runge--Kutta methods of the corresponding orders (SSPRK22, SSPRK33, and SSPRK(5,4)). For the second-order scheme, the observed convergence rate is reduced due to the application of a min-mod limiter to ensure the sign-preserving property. In contrast, the third- and fourth-order schemes achieve the expected convergence rates for both flux formulations. Figure \ref{fig:Cpu_VS_L1_error} shows the CPU time versus the $L_1$ error and indicates that the analytical flux becomes more advantageous on finer grids.
The computational costs of the two flux formulations are comparable on coarse meshes, with the differences depending on the order of the scheme. As the mesh is refined, however, the analytically derived flux becomes consistently more efficient. For the third-order scheme, for example, the computational time on the finest mesh is reduced from $125.0$ to $98.9$, corresponding to a reduction of approximately $21\%$. A similar, though less pronounced, trend is observed for the second- and fourth-order schemes, with reductions of approximately $11\%$ and $7\%$, respectively. Overall, both computational costs increase with mesh refinement and scheme order, while the analytical flux provides a moderate but systematic computational advantage on finer meshes, most notably for the third-order scheme. The complete results are summarized in Table \ref{tab:Tim_comparison}.

\begin{table}[ht]
\centering
\renewcommand{\arraystretch}{1.5}
\resizebox{\textwidth}{!}{%
\begin{tabular}{c|cccc|cccc}
\hline
\text{Order 2}
& \multicolumn{4}{c|}{Analytical flux}
& \multicolumn{4}{c}{Taylor flux } \\
\cline{2-9}
\text{No. of cells} & $L_1$ Error & Rate & $L_2$ Error & Rate
& $L_1$ Error & Rate & $L_2$ Error & Rate \\
\hline

16  & 3.13E-01 & -- & 3.55E-01 & -- & 3.11E-01 & -- & 3.54E-01 & -- \\
32  & 9.75E-02 & 1.682576211 & 1.20E-01 & 1.559193367 & 9.62E-02 & 1.692151823 & 1.20E-01 & 1.563536273 \\
64  & 3.82E-02 & 1.352151518 & 4.68E-02 & 1.364428849 & 3.80E-02 & 1.338707174 & 4.66E-02 & 1.358769389 \\
128 & 1.25E-02 & 1.613675817 & 1.68E-02 & 1.478164552 & 1.24E-02 & 1.611746904 & 1.68E-02 & 1.475061855 \\
256 & 3.49E-03 & 1.838037707 & 5.42E-03 & 1.633002853 & 3.49E-03 & 1.835629549 & 5.41E-03 & 1.632109906 \\
512 & 9.59E-04 & 1.863573094 & 1.72E-03 & 1.651450757 & 9.58E-04 & 1.863508442 & 1.72E-03 & 1.6511005 \\
1024 & 2.58E-04 & 1.8945768 & 5.46E-04 & 1.658832834 & 2.58E-04 & 1.894163179 & 5.46E-04 & 1.658640653 \\
\hline
\end{tabular}%
}
\caption{$L_1$ and $L_2$ errors and order of convergence for $f$ using the entropy-dissipative FD scheme with analytical and Taylor-expanded fluxes.}
\label{tab:O2_f}
\end{table}
\begin{table}[ht]
\centering
\renewcommand{\arraystretch}{1.5}
\resizebox{\textwidth}{!}{%
\begin{tabular}{c|cccc|cccc}
\hline
\text{Order 3}
& \multicolumn{4}{c|}{Analytical flux}
& \multicolumn{4}{c}{Taylor flux} \\
\cline{2-9}
\text{No. of cells} & $L_1$ Error & Rate & $L_2$ Error & Rate
& $L_1$ Error & Rate & $L_2$ Error & Rate \\
\hline

16  & 4.60E-02 & -- & 5.50E-02 & -- & 4.55E-02 & -- & 5.41E-02 & -- \\
32  & 7.83E-03 & 2.556240636 & 9.83E-03 & 2.484238378 & 7.83E-03 & 2.538972332 & 9.79E-03 & 2.466706175 \\
64  & 1.06E-03 & 2.88856732 & 1.37E-03 & 2.842576553 & 1.06E-03 & 2.887647399 & 1.37E-03 & 2.839022094 \\
128 & 1.34E-04 & 2.984356398 & 1.75E-04 & 2.965432659 & 1.34E-04 & 2.98480786 & 1.75E-04 & 2.964354765 \\
256 & 1.67E-05 & 2.997817362 & 2.20E-05 & 2.99383672 & 1.67E-05 & 2.998071528 & 2.20E-05 & 2.993231736 \\
512 & 2.09E-06 & 2.999664921 & 2.75E-06 & 2.998783491 & 2.09E-06 & 2.999796944 & 2.75E-06 & 2.998454591 \\
1024 & 2.61E-07 & 3.000059349 & 3.44E-07 & 2.999690431 & 2.61E-07 & 3.000124763 & 3.44E-07 & 2.999521528 \\
\hline
\end{tabular}%
}
\caption{$L_1$ and $L_2$ errors and order of convergence for $f$ using the entropy-dissipative FD scheme with analytical and Taylor-expanded fluxes.}
\label{tab:O3_f}
\end{table}

\begin{table}[ht]
\centering
\renewcommand{\arraystretch}{1.5}
\resizebox{\textwidth}{!}{%
\begin{tabular}{c|cccc|cccc}
\hline
\text{Order 4}
& \multicolumn{4}{c|}{Analytical flux}
& \multicolumn{4}{c}{Taylor flux} \\
\cline{2-9}
\text{No. of cells} & $L_1$ Error & Rate & $L_2$ Error & Rate
& $L_1$ Error & Rate & $L_2$ Error & Rate \\
\hline

16  & 1.82E-02 & -- & 2.20E-02 & -- & 1.79E-02 & -- & 2.14E-02 & -- \\
32  & 1.73E-03 & 3.388198289 & 2.37E-03 & 3.214505397 & 1.73E-03 & 3.36860498 & 2.35E-03 & 3.184001371 \\
64  & 1.33E-04 & 3.707608471 & 2.37E-04 & 3.32341675 & 1.37E-04 & 3.666133887 & 2.37E-04 & 3.309926718 \\
128 & 1.08E-05 & 3.622514992 & 2.30E-05 & 3.365368482 & 1.10E-05 & 3.635514312 & 2.30E-05 & 3.366566223 \\
256 & 7.49E-07 & 3.847002916 & 1.89E-06 & 3.607112397 & 7.62E-07 & 3.850795627 & 1.89E-06 & 3.607092335 \\
512 & 5.14E-08 & 3.864600627 & 1.55E-07 & 3.599852297 & 5.22E-08 & 3.866261881 & 1.55E-07 & 3.599988746 \\
1024 & 3.43E-09 & 3.90436349 & 1.29E-08 & 3.591605843 & 3.49E-09 & 3.905252299 & 1.29E-08 & 3.591691234 \\
\hline
\end{tabular}%
}
\caption{$L_1$ and $L_2$ errors and order of convergence for $f$ using the entropy-dissipative FD scheme with analytical and Taylor-expanded fluxes.}
\label{tab:O4_f}
\end{table}

\begin{table}[tbhp]
	\footnotesize
	\begin{center}
		\begin{tabular}{c|c|c|c|c|c|c|}
			\hline Number of cells  & \multicolumn{2}{|c}{Order 2} & \multicolumn{2}{|c}{Order 3} & \multicolumn{2}{|c}{Order 4}  \\
			\hline   & Analytical flux  &  Taylor flux &  Analytical flux  &  Taylor flux & Analytical flux  &  Taylor flux \\
			\hline 16 & 5.35E-01 & 5.29E-01 & 5.40E-01 & 5.02E-01 & 7.28E-01 & 8.32E-01 \\
			32 & 6.56E-01 & 6.65E-01 & 7.16E-01 & 7.78E-01 & 1.15E+00 & 1.41E+00 \\
			64 & 8.92E-01 & 7.99E-01 & 1.20E+00 & 1.16E+00 & 2.33E+00 & 2.70E+00 \\
			128 & 1.43E+00 & 1.43E+00 & 2.52E+00 & 2.99E+00 & 6.60E+00 & 8.44E+00 \\
			256 & 3.23E+00 & 3.41E+00 & 7.46E+00 & 8.60E+00 & 2.54E+01 & 2.78E+01 \\
			512 & 9.59E+00 & 1.03E+01 & 2.53E+01 & 3.15E+01 & 9.54E+01 & 1.04E+02 \\
            1024 & 3.17E+01 & 3.58E+01 & 9.89E+01 & 1.25E+02 & 3.46E+02 & 3.74E+02 \\
			\hline
		\end{tabular}
		\caption{Time comparison between analytical flux and Taylor flux (in seconds) using the entropy-dissipative FD scheme.}
		\label{tab:Tim_comparison}
	\end{center}
\end{table}

\subsection{Riemann Problems}\label{test:RP1}

While both approaches show the order of convergence we expect for a smooth solution, we investigate in this subsection how well these methods perform for two adapted Riemann problems.

\subsubsection{A shock-tube type initial data}\label{test:ST}
In the first example, we consider the following initial data
\begin{align}
    \mathbf{U}(x, 0)= \left\{\begin{array}{rcl}
    (1.50, 1.60, 1.60, 2.00)^\top&: &~x<0, \\
    (1.25, 1.15, 2.00, 2.10)^\top &: &~x>0. 
    \end{array} \right.
\end{align}
in the intervall $[-2,12]$. 
The boundary conditions are set as outflow, and the final time is set to $t=1$.\\
\textbf{Entropy dissipative FD: }
In Figure~\eqref{Test:ST}, the results of the entropy-dissipative FD scheme using the analytical fluxes and the Taylor approximated fluxes are presented using $400$ grid points and various orders of accuracy (referred to as $On$, $n=2,3,4$). Nearly no difference can be seen between the analytical derived flux and the Taylor approximated ones except due to the different orders. 

\begin{figure}[!htbp]
	\begin{center}
		\subfigure[$f$]{\includegraphics[width=0.5\textwidth]{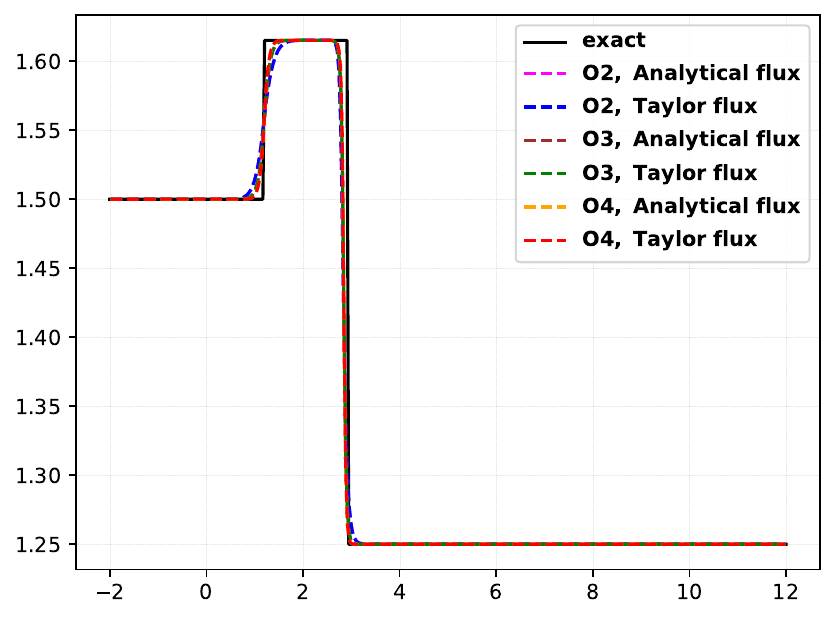}\label{fig:ST_f}}~
             \subfigure[$b$]{\includegraphics[width=0.5\textwidth]{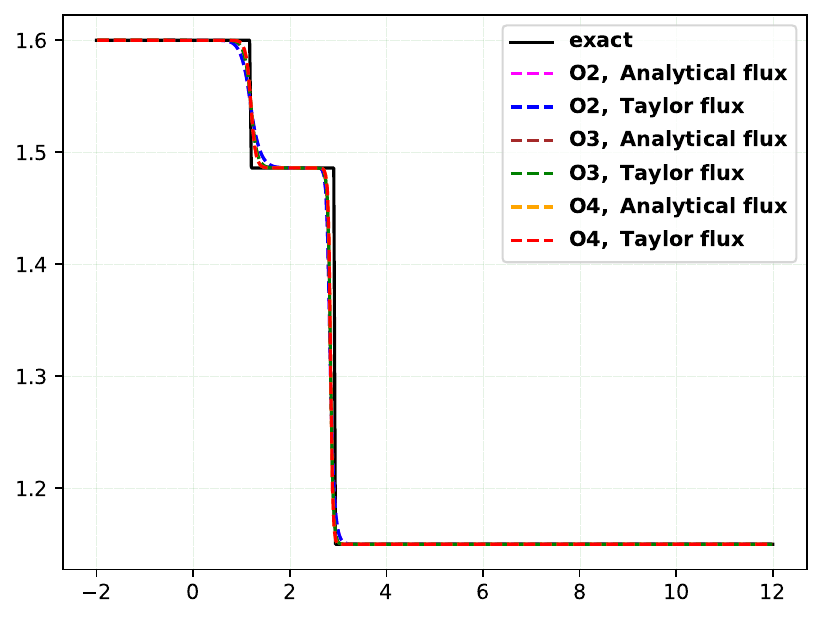}\label{fig:ST_b}}\\
            \subfigure[$g$]{\includegraphics[width=0.5\textwidth]{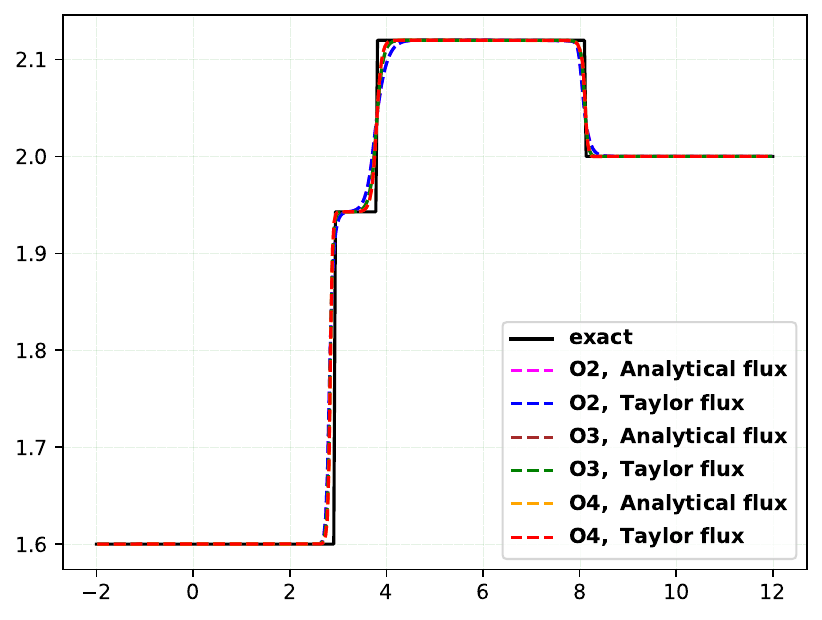}\label{fig:ST_g}}~
            \subfigure[$q$]{\includegraphics[width=0.5\textwidth]{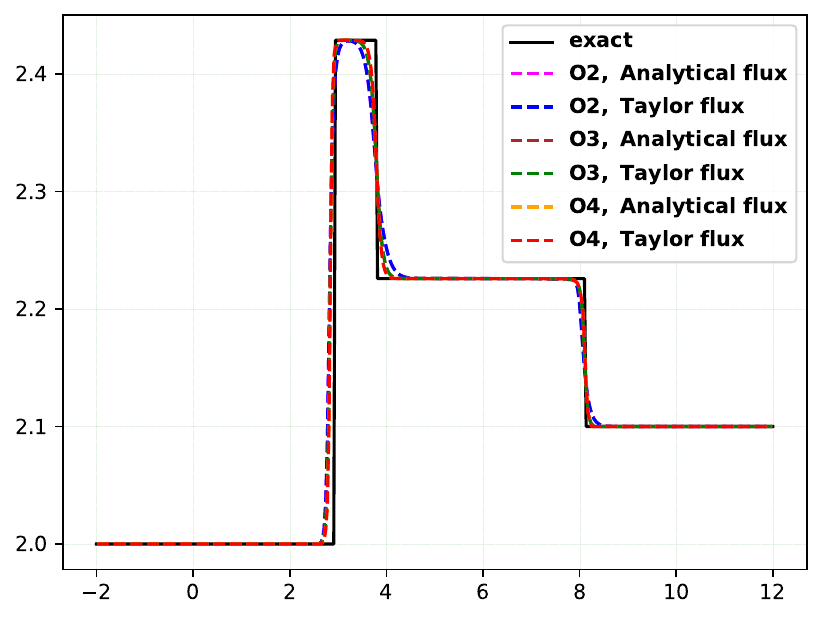}\label{fig:ST_q}}\\
		\caption{\textbf{\nameref{test:ST}}: Plots for $\ote$, $\othe$ and $\ofe$ schemes using 400 cells using Taylor flux.}
        \label{Test:ST}
	\end{center}
\end{figure}
\textbf{DGSEM:} 
We now investigate the performance of the entropy-dissipative DGSE scheme, with particular emphasis on the interplay between flux discretization and shock resolution. The scheme is based on the flux differencing ansatz, combined with our newly developed entropy-conservative numerical fluxes for the volume terms resulting in a skew-symmetric forumulation. To stabilize the solution near discontinuities, we apply local Lax-Friedrichs-type numerical flux as surface flux, which introduces dissipation and improves the robustness.\\
For time integration, we employ the SSPRK(3,3) with a fixed time step $\Delta t = 0.001$. The spatial discretization uses polynomial approximations of degree $p = 2$, resulting in a fully third-order accurate scheme in both space and time. Due to the high-order nature of the method, spurious oscillations due to the Gibbs phenomena emerge near discontinuities. To suppress these oscillations while preserving accuracy in smooth regions, we apply the subcell shock capturing algorithm proposed in~\cite{hennemann2021provably}, adapted to our framework.

The computational domain is discretized using $128$ elements, with corresponding Gauss--Lobatto quadrature points. This setup yields approximately 384 degrees of freedom, providing a fine resolution suitable for capturing sharp gradients and shocks.

\begin{figure}[htbp]
\centering
\includegraphics[width=0.8\textwidth]{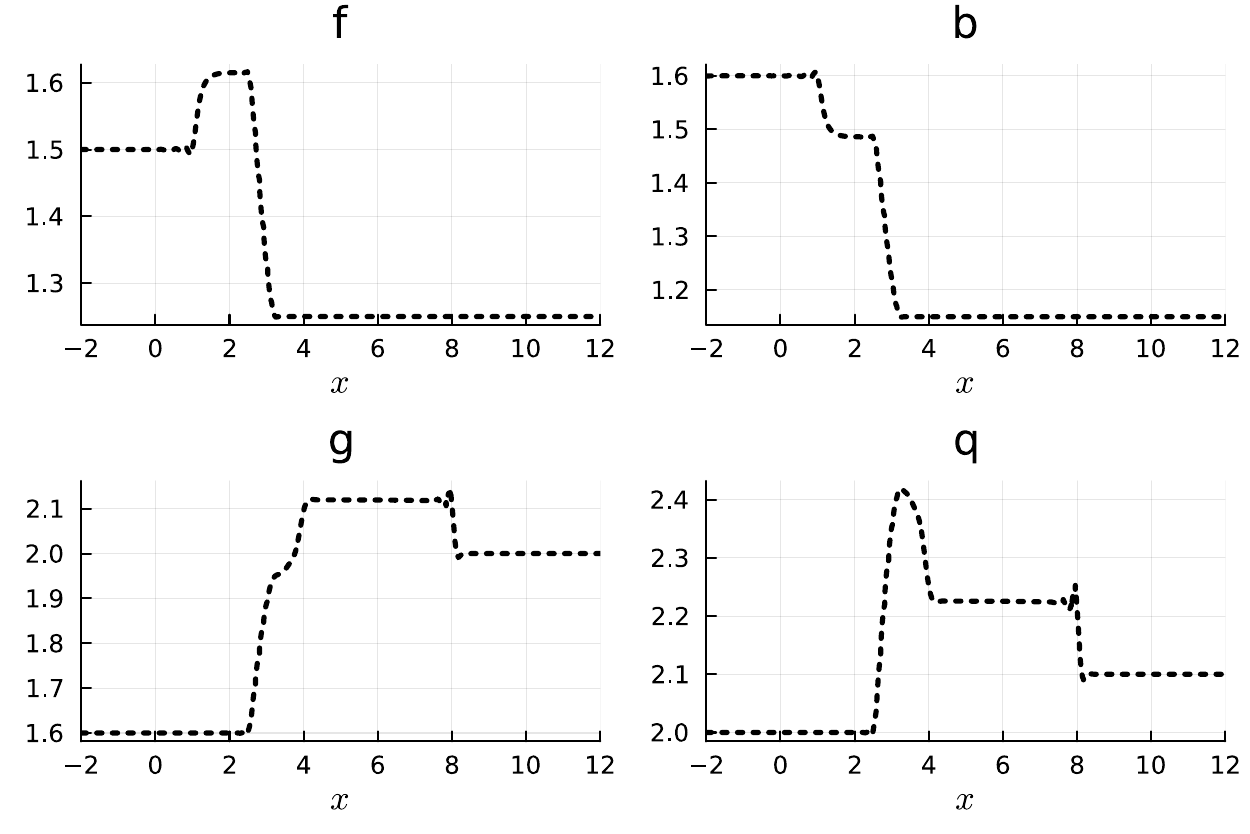}
\label{fig:results_taylor}
\caption{Numerical solutions of the entropy dissipative DSGEM scheme using subcell FV.}
\label{fig:results_DGSEM}
\end{figure}

In Figure~\ref{fig:results_DGSEM}, we present the numerical result obtained by using the Taylor-approximated flux (which, in this case, does not visibly alter to the result obtained by the scheme using the analytical flux)(not shown). Although the solution profiles appear visually similar, a striking difference emerges in the computational cost. The simulation using the Taylor-approximated flux requires approximately $\approx 1.573$ seconds of wall-clock time, whereas the scheme based on the analytical flux completes in only $0.078$ seconds—demonstrating a 20-fold improvement in efficiency.%1.572982, 0.077955

This discrepancy highlights a critical insight: while the Taylor approximation may appear harmless in terms of solution quality, it significantly degrades computational performance due to increased complexity in the flux evaluation solving the system numerically. 

The entropy behavior of both schemes is quantified via the discrete entropy production rate
$
\sum \frac{\partial \eta}{\partial \mathbf{U}} \cdot \mathbf{U}_t,
$
which measures the deviation from entropy conservation. For the Taylor-approximated flux, the entropy change is approximately $-6.21110298 \times 10^{-2}$, and for the analytical flux, it is $-6.21110521 \times 10^{-2}$. Both values are negative, confirming that the scheme is entropy-dissipative. %$-6.21110298 \times 10^{-2}$,$-6.21110521 \times 10^{-2}$

The shock detection and subcell FV approach are performed using a smoothness indicator based on the coefficients of the polynomial expansion. In particular, it will be controlled by algebraic combinations of the conserved variables. After extensive testing of various indicators, we found that $f^4$ provides robust and reliable detection across a range of Riemann problems, including the current test case. This choice not only ensures accurate localization of discontinuities but also maintains stability and accuracy in the subcell finite volume reconstruction even if small oscillation can still be recognized in   $g$ and $q$. Note that examples can be constructed, where $f^4$ does not provide the aforementioned properties.

The presented results exhibit a relatively high level of numerical dissipation, which is expected for a shock-capturing scheme with a subcell FV approach. This dissipation can be reduced by increasing the spatial resolution through grid refinement. In Figure~\ref{fig:results_DGSEM_2}, we demonstrate the solutions for the  variable 
$f$ computed on two different grids: $128$ and $256$ elements. The results demonstrate a clear improvement in resolution with finer grids, particularly in the resolution of sharp gradients. Notably, even the solution obtained by the coarser grid ($128$ cells) shows small oscillations.
In future work, a detailed comparison and analysis of various shock detection techniques and developing other limiting techniques will be conducted to further improve the performance and robustness of the schemes. However, such an investigation lies beyond the scope of this study.

\begin{figure}[htbp]
\centering
\includegraphics[width=0.5\textwidth]{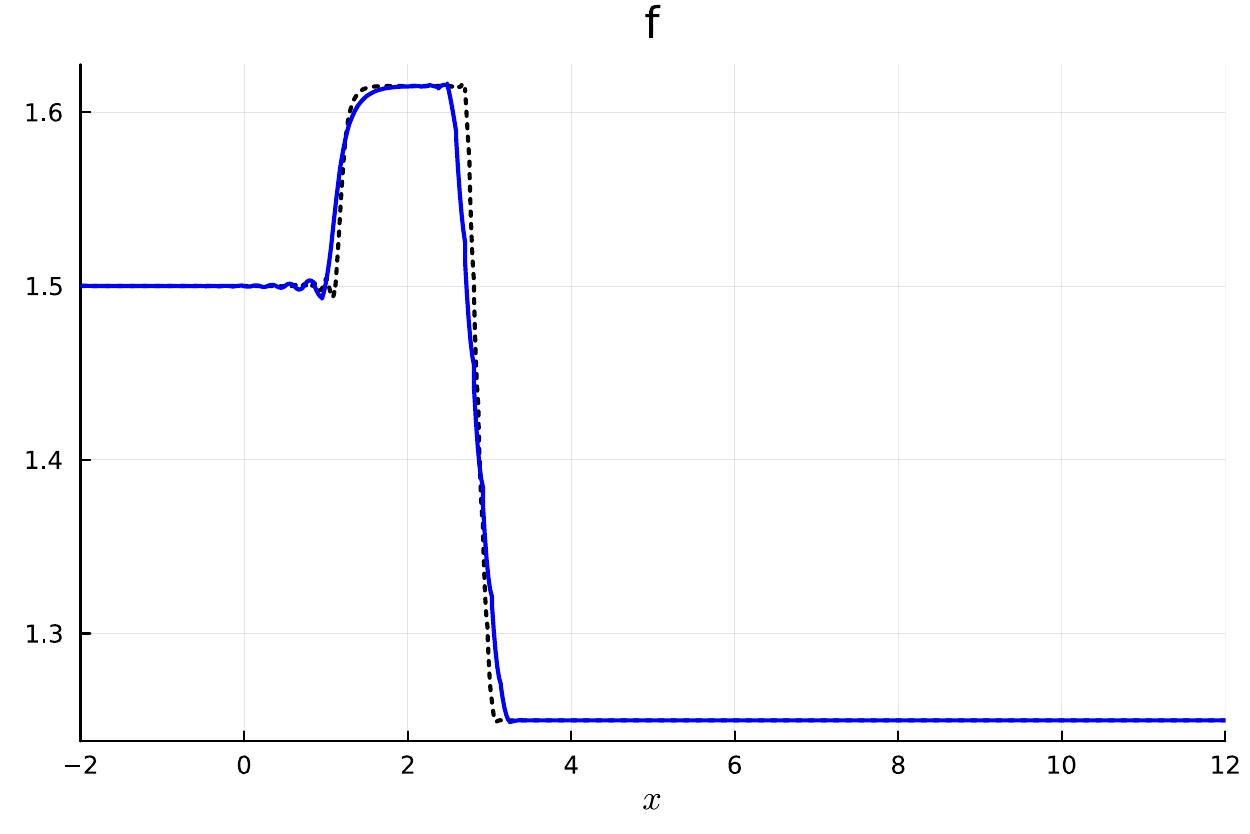}
\label{fig:results_taylor}
\caption{Numerical solutions of the entropy dissipative DSGEM scheme using subcell FV using 128 cells (blue) and 256 cells (black dotted).}
\label{fig:results_DGSEM_2}
\end{figure}

\subsubsection{Riemann problem with negative $b$}\label{test:RP2}

For this test case, we choose our initial data of the form\vspace{-0.2 cm}
\begin{align}
    \mathbf{U}(x, 0)= \left\{\begin{array}{rcl}
    (1.57, -1.15, 2.50, 1.90)^\top&: &~-10\leq x\leq 10, \\
    (1.90,-0.58,2.40,2.30)^\top &: &~10< x\leq 40, 
    \end{array} \right.
\end{align}
end time $t=3.5$ together with outflow conditions. Note that a different state space including negative $b$ is considered. We adapt the flux accordingly for both, Taylor and the analytical one. In Figure~\ref{Test:RP_2}, we present the result using the entropy dissipative FD scheme with the Taylor approximated flux applied with 400 cells whereas for DGSEM, we apply again the flux differencing ansatz together with the subcell FV scheme (same setting as in test \ref{test:ST}, e.g., analytical flux as volume flux and local Lax-Friedrichs flux as surface flux), use 256 cells and present the results for polynomial order $p=2,3$ (together with SSPRK(3,3) and SSPRK(5,4) and $\Delta t=0.001$). The results are presented in 
 Figure \ref{fig:results_DGSEM_3}. Here, we recognized small oscillations in $f$ and smaller in the other variables. 
\begin{figure}[!htbp]
	\begin{center}
		\subfigure[$f$]{\includegraphics[width=0.5\textwidth]{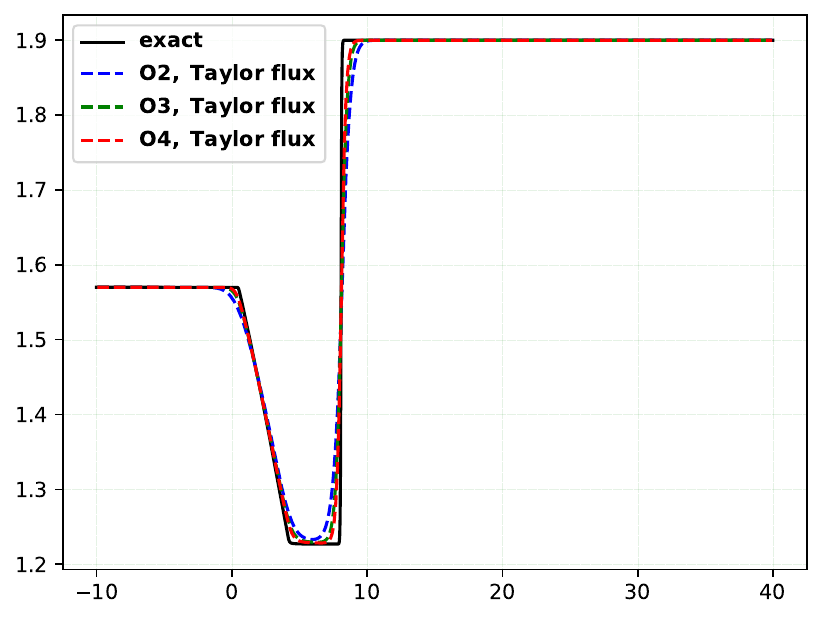}\label{fig:RP2_f}}~
             \subfigure[$b$]{\includegraphics[width=0.5\textwidth]{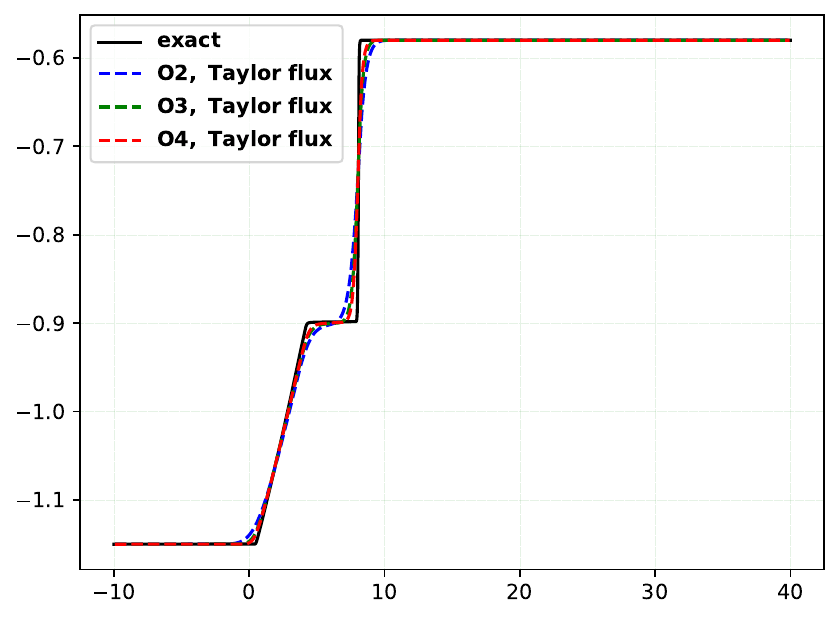}\label{fig:RP2_b}}\\
            \subfigure[$g$]{\includegraphics[width=0.5\textwidth]{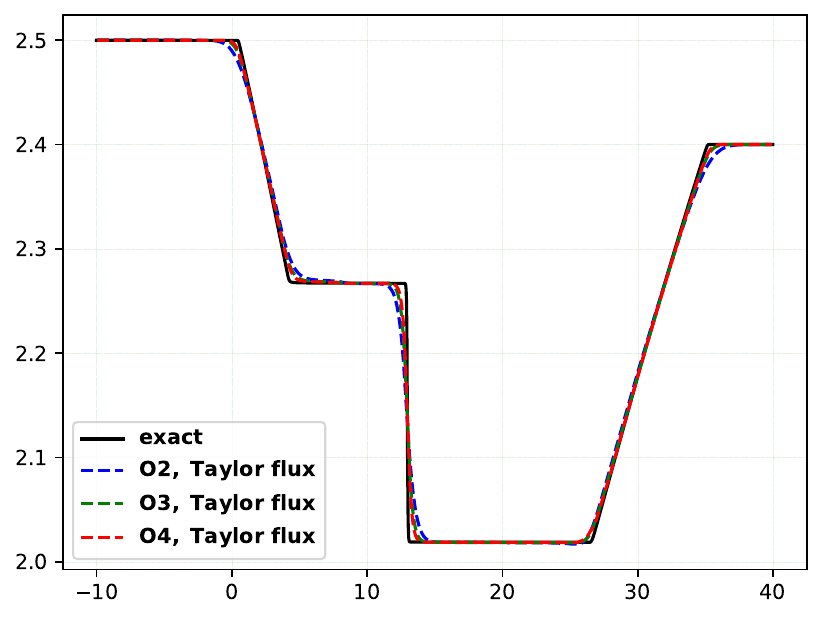}\label{fig:RP2_g}}~
            \subfigure[$q$]{\includegraphics[width=0.5\textwidth]{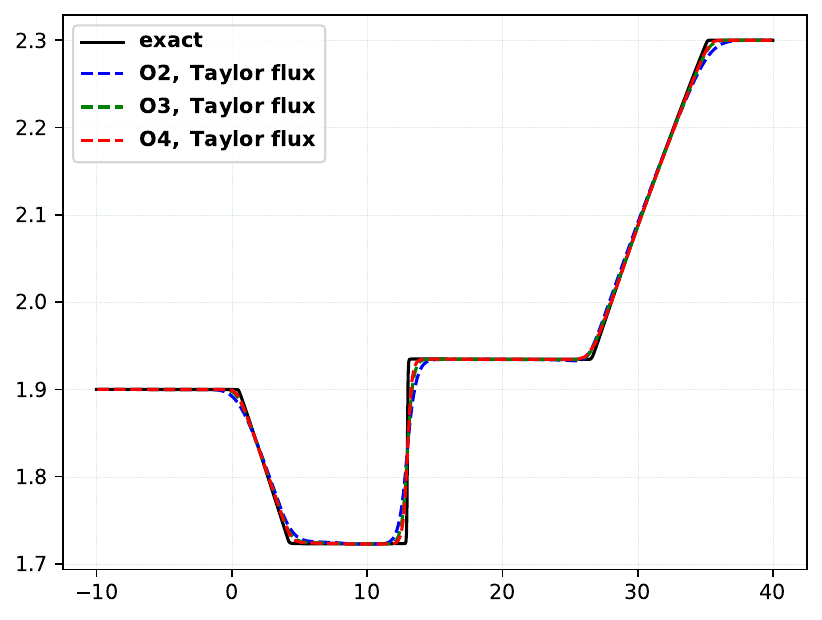}\label{fig:RP2_q}}\\
		\caption{\textbf{\nameref{test:RP2}}: Plots for $\ote$, $\othe$ and $\ofe$ schemes using 400 cells using Taylor flux.}
        \label{Test:RP_2}
	\end{center}
\end{figure}

\begin{figure}[htbp]
\centering
\includegraphics[width=\textwidth]{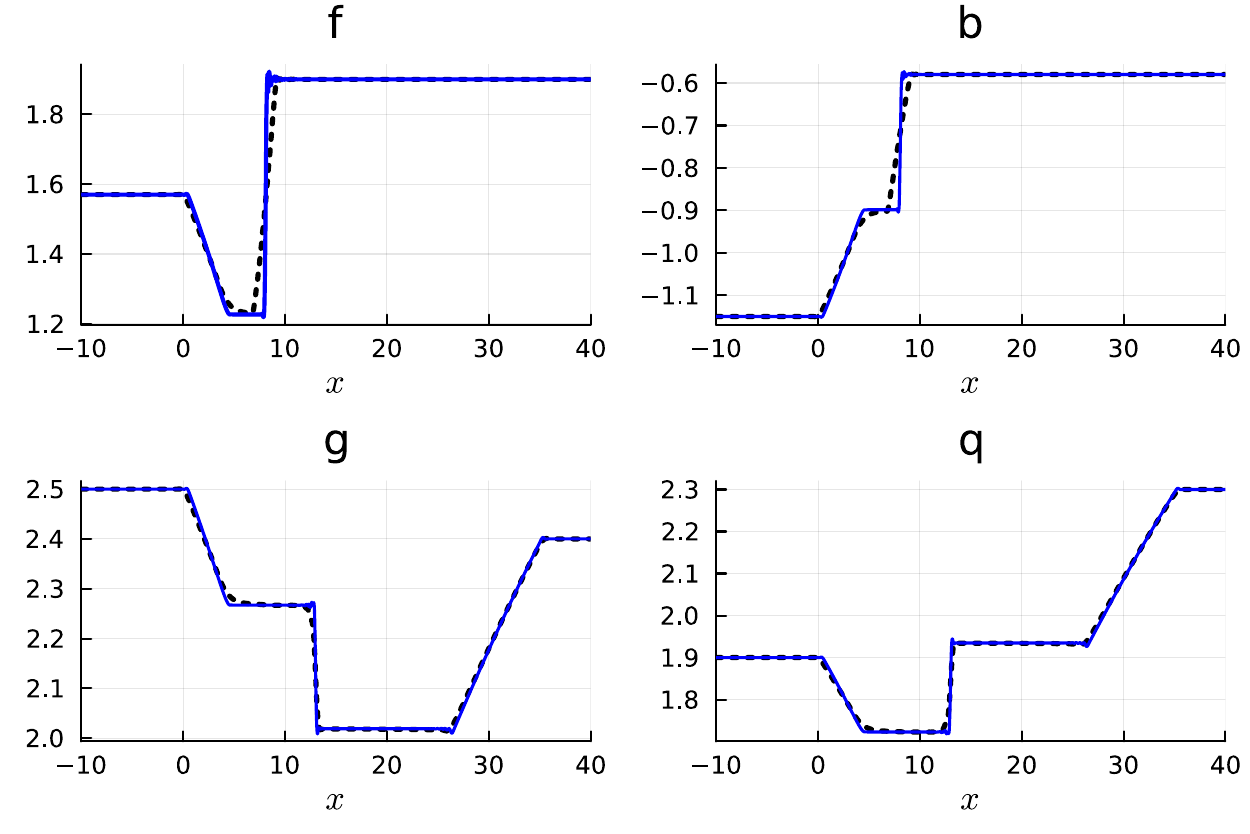}
\label{fig:results_taylor}
\caption{Numerical solutions of the entropy-dissipative DGSEM  using the analytically derived flux and subcell FV, computed on a grid of 256 elements with polynomial degrees 
p=2 (black dotted line) and 
p=3 (blue solid line).}
\label{fig:results_DGSEM_3}
\end{figure}
%--------------------------------------------------------------------
%--------------------------------------------------------------------
\subsection{Two-dimensional problems}

Next, we extend our numerical investigation to two-dimensional problems as introduced in \cite{biswas_2026_Riemann} of the form

\[
\con_t+F(\con)_x+F(\con)_y=0
\]
with $F$ as in \eqref{eq:flux}. 
The initial condition is given by
\begin{align*}
f(x,y,0)&=1.0,\\
b(x,y,0)&=0.5+0.1\phi(r),\\
g(x,y,0)&=2.0,\\
q(x,y,0)&=0.5+0.2\phi(r),\\
\end{align*}
where
$
\phi(r)=\frac{1}{2}\left(1+\tanh(2(2-r))\right)
$
and $r$ denotes the distance to the point $(-1,-1)$  in the domain $\Omega=[-4,4]^2$. Thus, the initial condition consists of a smooth radially symmetric perturbation centered at $(-1,-1)$. \\
In Figure \ref{fig:2d1}, we present the results obtained with the DGSEM using flux differencing and the subcell FV approach. The polynomial order is $p=2$ and the grid is $128^2$. The final time is T=1, and SSPRK(3,3) is used with $\Delta t=0.001$. The surface flux is the local Lax--Friedrichs flux, while the volume flux is the analytically derived flux.
The results are in good agreement with those obtained in \cite{biswas_2026_Riemann} and demonstrate the movement of the circle.
The computation time for this setting is around 21 seconds, whereas the method using the Taylor-approximated flux inside the volume integrals gives a visually indistinguishable result (not shown). However, the computation time on the same machine is around 860 seconds without any improvement to the linear solver. For the Taylor-approximated flux, the resulting linear system has to be solved numerically at each time step, whereas the analytically derived flux provides an explicit formula that can be evaluated directly. We use the in-house linear solver available in Julia for the numerical solution of the Taylor-based system without further solver-specific optimization. We do not claim that this linear system cannot be solved more efficiently using alternative numerical solvers or optimization techniques; rather, the reported computation time reflects the particular implementation used in our comparison.
These results demonstrate that both approaches can be used. However, for two-dimensional simulations, the scheme with the analytical flux is significantly faster, even though it was more complicated to derive.

\begin{figure}
    \centering
    \includegraphics[width=\textwidth]{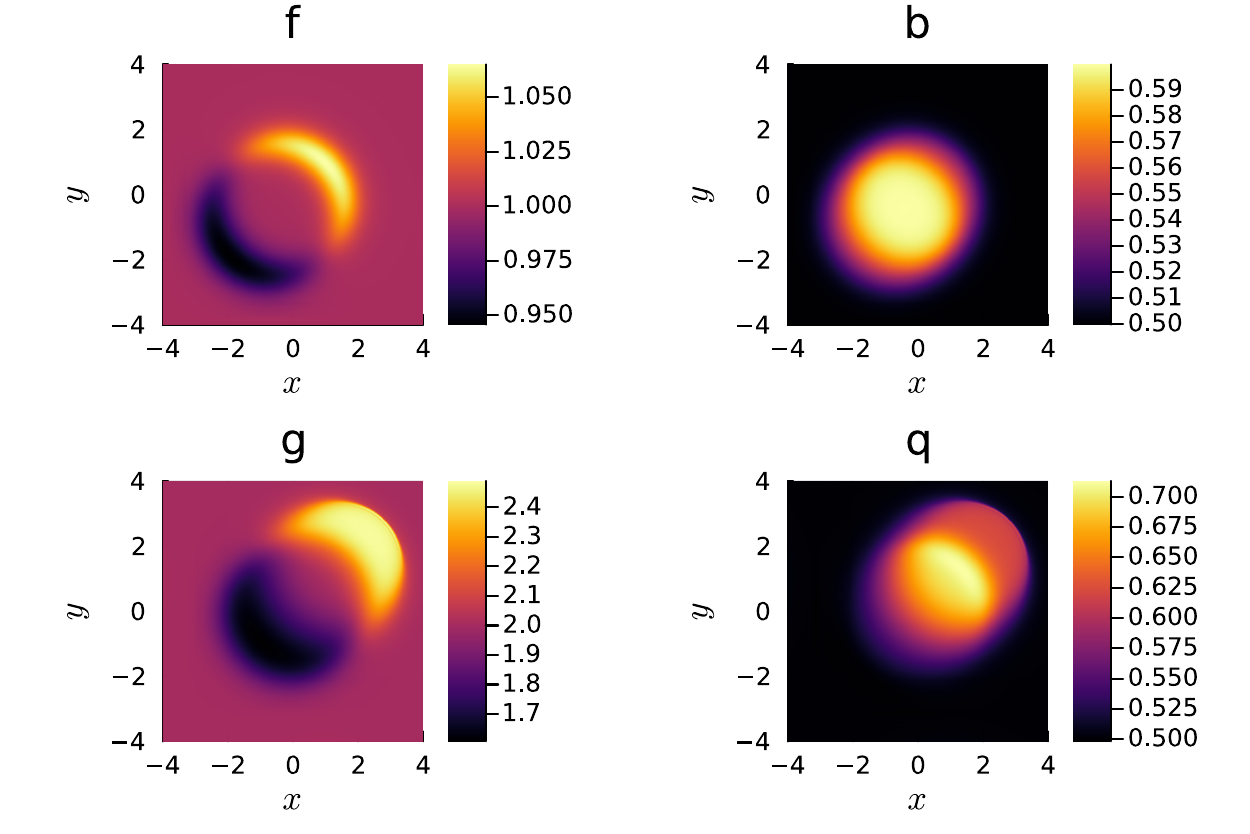}
    \caption{DGSEM with flux differencing and subcell FV shock detection, polynomial order $p=2$, with the analytical flux.}
    \label{fig:2d1}
\end{figure}

For the second test case, we consider a two-dimensional problem on the domain
$
\Omega=[-1,1]^2.
$
The initial condition is given by
\begin{align*}
f(x,y,0)&=1.0,\\
b(x,y,0)&=-\operatorname{sech}^2\left(5(r-R(\theta))\right),\\
g(x,y,0)&=0.8,\\
q(x,y,0)&=\operatorname{sech}^2\left(5(r-R(\theta))\right),
\end{align*}
where
$
r=\sqrt{x^2+y^2},\;
\theta=\operatorname{atan}(y,x),
$
and
$
R(\theta)=0.35+0.03\cos(2\theta).
$

Thus, the initial condition describes a smooth ring-shaped perturbation centered at the origin. The radius of the ring varies periodically with the angle, resulting in a slightly deformed circular shape. The perturbation is localized around $r=R(\theta)$ by the squared hyperbolic secant function, with the parameter $5$ controlling the thickness of the ring. The first and third components remain constant, while the second and fourth components contain perturbations of opposite sign. In particular,
$
b(x,y,0)=-q(x,y,0)\leq 0,
$
with $b=-1$ and $q=1$ at the center of the ring, where $r=R(\theta)$. Away from the ring, both perturbations smoothly approach zero.\\
In Fig.\ref{fig:2d2}, we see the numerical result at $T=0.2$. While $b$ is strictly negative (within the range $(-1,0)$), we see almost identical behavior of $f$ and $g$ with jumps across the $x=y$-line, and $b$ and $q$ being the negative (up to a positive factor) of each other. The jumps (in $f$ and $g$) are resolved very well, even with a rough grid of $128\times 128$ cells.

\begin{figure}
    \centering
    \includegraphics[width=\textwidth]{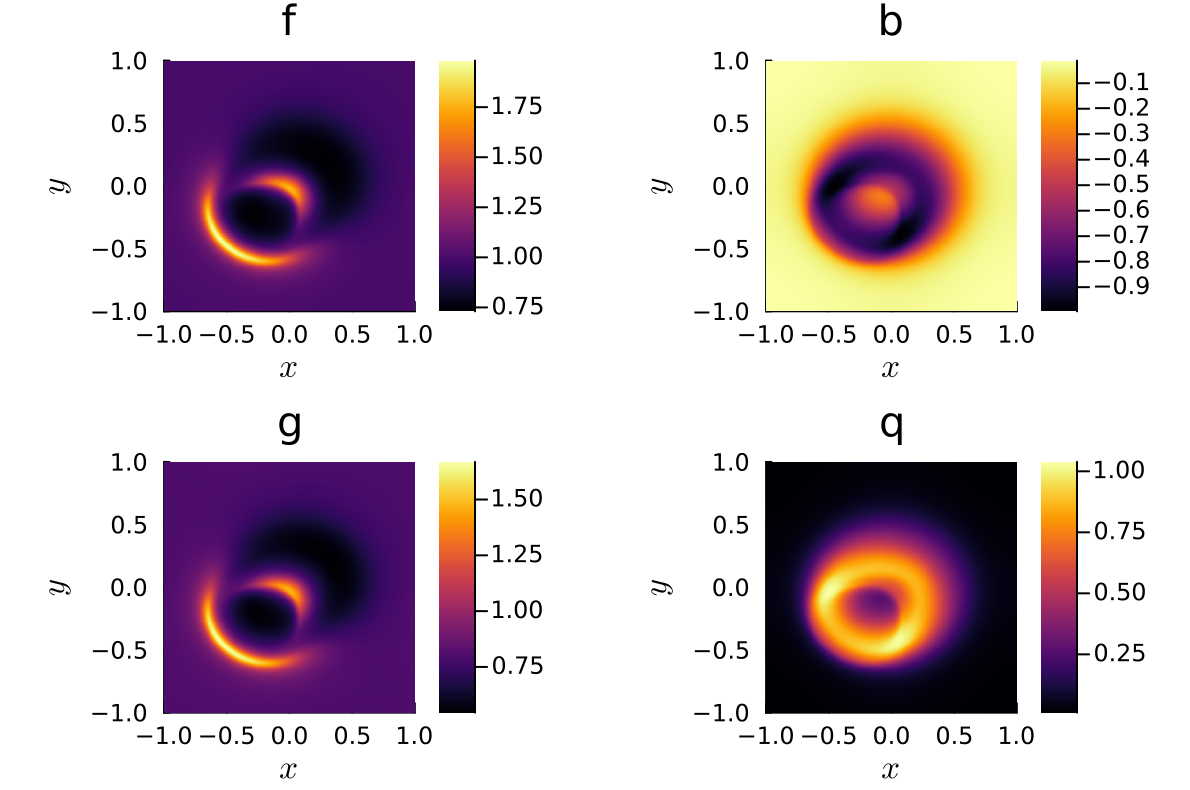}
    \caption{DGSEM with flux differencing and subcell FV shock detection, polynomial order $p=1$, with the analytical flux on a mesh with $128\times 128$ elements.}
    \label{fig:2d2}
\end{figure}

\section{Outlook and Summary}
In this work, we have developed and applied two different entropy-conservative numerical fluxes for a thin-film model. One flux is based on a Taylor approximation, whereas the second one is derived analytically. The Taylor-approximated flux is easier to derive and is based on ideas that are also used in various other settings, e.g., for the Euler equations and shallow water equations. Both fluxes lead to a linear system of equations. While the Taylor-approximated flux has to be solved numerically, the analytical one can be evaluated exactly within each time step of the underlying scheme, resulting in a more efficient approach when applied within numerical schemes.
We have used both approaches and compared them in two different numerical methods. We have employed them in an entropy-dissipative finite-difference setting and within the flux-differencing DGSEM method. The performance was, as expected, similar in terms of accuracy and stability, but an improvement was observed in terms of runtime due to the analytical derivation of the numerical solution. We tested both approaches on smooth problems, Riemann problems in one space dimension, and two-dimensional problems. In particular, for experiments involving shocks, we used additional limiters in the context of the entropy-dissipative finite-difference method, whereas for the DGSEM, we applied the subcell finite-volume technique developed in \cite{hennemann2021provably}.
In the future, we will develop bound-preserving limiters in the spirit of \cite{zbMATH07745801, zbMATH07464528, hennes2021} for the considered model to provably ensure the preservation of the underlying physical laws.

\subsection*{Acknowledgement}
P.Ö. was supported by the German Research Foundation (DFG)  within SPP 2410, project
525866748 (OE 661/5-1) and under the personal grant  520756621 (OE 661/4-1).\\
%--------------------------------------------------------------------
%--------------------------------------------------------------------
%--------------------------------------------------------------------
%--------------------------------------------------------------------
\small
\bibliographystyle{unsrt}  
\bibliography{references}
%--------------------------------------------------------------------
%--------------------------------------------------------------------
%--------------------------------------------------------------------
%--------------------------------------------------------------------
%--------------------------------------------------------------------
%--------------------------------------------------------------------
\normalsize 
\appendix

\section{Details for Taylor expansion}\label{se_Taylor}
Here, we give the concrete idea how the numerical two-point flux is developed from Section \ref{Sec_Tadmor}. Therefore, we 
calculate the jumps of $v_i$ together with the corresponding numerical fluxes using a Taylor expansion and the discrete chain rule. We get:
\begin{equation}\label{eq_Taylor_1}
\begin{aligned}
	\jump{v_1}\cdot F_1 &=[\![f]\!]\left\{ \pm\frac{\ccc{\left(\frac{1}{b f^2}\right)}}{\ccc{b f^2}} 2 \ccc{b}\ccc{f} + \frac{2\ccc{b}~ \ccc{(bf+gq)} ~\ccc{\left(\frac{b}{(bf+gq)^2}\right)}}{\ccc{(bf+gq)^2}}\right\}F_1\\
	&+ [\![b]\!]\left\{ \pm\frac{\ccc{\left(\frac{1}{b f^2}\right)}}{\ccc{b f^2}} \ccc{f^2} - \frac{1 - 2\ccc{f}~ \ccc{(bf+gq)} ~\ccc{\left(\frac{b}{(bf+gq)^2}\right)}}{\ccc{(bf+gq)^2}}\right\}F_1\\
	&+ [\![g]\!]\left\{ \frac{2\ccc{q}~ \ccc{(bf+gq)} ~\ccc{\left(\frac{b}{(bf+gq)^2}\right)}}{\ccc{(bf+gq)^2}}\right\}F_1 + [\![q]\!]\left\{ \frac{2\ccc{g}~ \ccc{(bf+gq)} ~\ccc{\left(\frac{b}{(bf+gq)^2}\right)}}{\ccc{(bf+gq)^2}}\right\}F_1,\\
	\jump{v_2}\cdot F_2 &=[\![f]\!]\left\{ \pm\frac{\ccc{\left(\frac{1}{b^2 f}\right)}}{\ccc{b^2 f}} \ccc{b^2} - \frac{1- 2\ccc{b}~ \ccc{(bf+gq)} ~\ccc{\left(\frac{f}{(bf+gq)^2}\right)}}{\ccc{(bf+gq)^2}}\right\}F_2\\
	&+ [\![b]\!]\left\{ \pm\frac{\ccc{\left(\frac{1}{b^2 f}\right)}}{\ccc{b^2 f}} 2 \ccc{b}\ccc{f} + \frac{2\ccc{f}~ \ccc{(bf+gq)} ~\ccc{\left(\frac{f}{(bf+gq)^2}\right)}}{\ccc{(bf+gq)^2}}\right\}F_2\\
	&+ [\![g]\!]\left\{ \frac{2\ccc{q}~ \ccc{(bf+gq)} ~\ccc{\left(\frac{f}{(bf+gq)^2}\right)}}{\ccc{(bf+gq)^2}}\right\}F_2 + [\![q]\!]\left\{ \frac{2\ccc{g}~ \ccc{(bf+gq)} ~\ccc{\left(\frac{f}{(bf+gq)^2}\right)}}{\ccc{(bf+gq)^2}}\right\}F_2,
\\
	\jump{v_3}\cdot F_3 &=[\![f]\!]\left\{ \frac{ 2\ccc{b}~ \ccc{(bf+gq)} ~\ccc{\left(\frac{q}{(bf+gq)^2}\right)}}{\ccc{(bf+gq)^2}}\right\}F_3 + [\![b]\!]\left\{ \frac{2\ccc{f}~ \ccc{(bf+gq)} ~\ccc{\left(\frac{q}{(bf+gq)^2}\right)}}{\ccc{(bf+gq)^2}}\right\}F_3\\
	&+ [\![g]\!]\left\{ \frac{2\ccc{q}~ \ccc{(bf+gq)} ~\ccc{\left(\frac{q}{(bf+gq)^2}\right)}}{\ccc{(bf+gq)^2}}\right\}F_3 \\
    &+ [\![q]\!]\left\{ -\frac{1- 2\ccc{g}~ \ccc{(bf+gq)} ~\ccc{\left(\frac{q}{(bf+gq)^2}\right)}}{\ccc{(bf+gq)^2}}\right\}F_3,
\\
	\jump{v_4}\cdot F_4 &=[\![f]\!]\left\{ \frac{ 2\ccc{b}~ \ccc{(bf+gq)} ~\ccc{\left(\frac{g}{(bf+gq)^2}\right)}}{\ccc{(bf+gq)^2}}\right\}F_4 + [\![b]\!]\left\{ \frac{2\ccc{f}~ \ccc{(bf+gq)} ~\ccc{\left(\frac{g}{(bf+gq)^2}\right)}}{\ccc{(bf+gq)^2}}\right\}F_4\\
	&+ [\![g]\!]\left\{ -\frac{1- 2\ccc{q}~ \ccc{(bf+gq)} ~\ccc{\left(\frac{g}{(bf+gq)^2}\right)}}{\ccc{(bf+gq)^2}}\right\}F_4 \\
    &+ [\![q]\!]\left\{\frac{2\ccc{g}~ \ccc{(bf+gq)} ~\ccc{\left(\frac{g}{(bf+gq)^2}\right)}}{\ccc{(bf+gq)^2}}\right\}F_4.
\end{aligned}
\end{equation}
%==============================================================================
%==============================================================================
Further, we simplify $[\![ \Psi]\!]$, and obtain 
\begin{align}
	[\![ \Psi]\!]&= \frac{\ccc{f}[\![b]\!] + \ccc{b}[\![f]\!] - \ccc{\left(\frac{bf}{(bf+gq)}\right)}\left\{\ccc{b}[\![f]\!] + \ccc{f}[\![b]\!] + \ccc{g}[\![q]\!] + \ccc{q}[\![g]\!]\right\}}{2~\ccc{(bf+gq)}}\nonumber\\
	&+ \frac{3}{2}\left\{\pm\frac{[\![b]\!]}{\ccc{b}_{\log}} \pm \frac{[\![f]\!]}{\ccc{f}_{\log}}  + \frac{\ccc{b}[\![f]\!] + \ccc{f}[\![b]\!] + \ccc{g}[\![q]\!] + \ccc{q}[\![g]\!]}{  \ccc{bf+gq}_{\log} }\right\}\nonumber\\
	[\![ \Psi]\!]&= [\![f]\!]\left\{ \pm\frac{3}{2\ccc{f}_{\text{log}}} 
    + \frac{3\ccc{b}}{2\ccc{bf+gq}_{\text{log}}} + \frac{\ccc{b} - \ccc{b}~ \ccc{\left(\frac{bf}{(bf+gq)}\right)}}{2~\ccc{(bf+gq)}}\right\} \nonumber \\& 
    + [\![b]\!]\left\{ \pm\frac{3}{2\ccc{b}_{\text{log}}} + \frac{3\ccc{f}}{2\ccc{bf+gq}_{\text{log}}} + \frac{\ccc{f} - \ccc{f}~ \ccc{\left(\frac{bf}{(bf+gq)}\right)}}{2~\ccc{(bf+gq)}}\right\} \nonumber\\
	&+ [\![g]\!]\left\{ \frac{3\ccc{q}}{2\ccc{bf+gq}_{\text{log}}} - \frac{\ccc{q}~ \ccc{\left(\frac{bf}{(bf+gq)}\right)}}{2~\ccc{(bf+gq)}}\right\}  + [\![q]\!]\left\{ \frac{3\ccc{g}}{2\ccc{bf+gq}_{\text{log}}} - \frac{\ccc{g}~ \ccc{\left(\frac{bf}{(bf+gq)}\right)}}{2~\ccc{(bf+gq)}}\right\}.\label{Jump_e_p}
\end{align}

Equating the coefficients of $\jump{f}$, $\jump{b}$, $\jump{g}$, and
$\jump{q}$ in \eqref{eq_Taylor_1} and \eqref{Jump_e_p},
we obtain the following $4\times4$ linear system:
\begin{equation*}
\mathbf{A} F^{\mathrm{num}}=\mathbf{b}. 
\end{equation*}
The coefficient matrix and the right-hand side are given explicitly below. It is 
\begin{equation*}
\mathbf{A}
=
\begin{pmatrix}
A_{11} & A_{12} & A_{13} & A_{14}\\
A_{21} & A_{22} & A_{23} & A_{24}\\
A_{31} & A_{32} & A_{33} & A_{34}\\
A_{41} & A_{42} & A_{43} & A_{44}
\end{pmatrix},
\end{equation*}
with
\begin{align*}
A_{11}
&=
\pm
\frac{\ccc{\left(\frac{1}{bf^2}\right)}}{\ccc{bf^2}}
\,2\ccc{b}\ccc{f}
+
\frac{
2\ccc{b}\,\ccc{(bf+gq)}
\,\ccc{\left(\frac{b}{(bf+gq)^2}\right)}
}{
\ccc{(bf+gq)^2}
},
\\
A_{12}
&=
\pm
\frac{\ccc{\left(\frac{1}{b^2f}\right)}}{\ccc{b^2f}}
\,\ccc{b^2}
-
\frac{
1-
2\ccc{b}\,\ccc{(bf+gq)}
\,\ccc{\left(\frac{f}{(bf+gq)^2}\right)}
}{
\ccc{(bf+gq)^2}
},
\\
A_{13}
&=
\frac{
2\ccc{b}\,\ccc{(bf+gq)}
\,\ccc{\left(\frac{q}{(bf+gq)^2}\right)}
}{
\ccc{(bf+gq)^2}
},
\qquad
A_{14}
=
\frac{
2\ccc{b}\,\ccc{(bf+gq)}
\,\ccc{\left(\frac{g}{(bf+gq)^2}\right)}
}{
\ccc{(bf+gq)^2}
},
\end{align*}

\begin{align*}
A_{21}
&=
\pm
\frac{\ccc{\left(\frac{1}{bf^2}\right)}}{\ccc{bf^2}}
\,\ccc{f^2}
-
\frac{
1-
2\ccc{f}\,\ccc{(bf+gq)}
\,\ccc{\left(\frac{b}{(bf+gq)^2}\right)}
}{
\ccc{(bf+gq)^2}
},
\\
A_{22}
&=
\pm
\frac{\ccc{\left(\frac{1}{b^2f}\right)}}{\ccc{b^2f}}
\,2\ccc{b}\ccc{f}
+
\frac{
2\ccc{f}\,\ccc{(bf+gq)}
\,\ccc{\left(\frac{f}{(bf+gq)^2}\right)}
}{
\ccc{(bf+gq)^2}
},
\\
A_{23}
&=
\frac{
2\ccc{f}\,\ccc{(bf+gq)}
\,\ccc{\left(\frac{q}{(bf+gq)^2}\right)}
}{
\ccc{(bf+gq)^2}
},
\qquad
A_{24}
=
\frac{
2\ccc{f}\,\ccc{(bf+gq)}
\,\ccc{\left(\frac{g}{(bf+gq)^2}\right)}
}{
\ccc{(bf+gq)^2}
},
\end{align*}

\begin{align*}
A_{31}
&=
\frac{
2\ccc{q}\,\ccc{(bf+gq)}
\,\ccc{\left(\frac{b}{(bf+gq)^2}\right)}
}{
\ccc{(bf+gq)^2}
},
\qquad
A_{32}
=
\frac{
2\ccc{q}\,\ccc{(bf+gq)}
\,\ccc{\left(\frac{f}{(bf+gq)^2}\right)}
}{
\ccc{(bf+gq)^2}
},
\\
A_{33}
&=
\frac{
2\ccc{q}\,\ccc{(bf+gq)}
\,\ccc{\left(\frac{q}{(bf+gq)^2}\right)}
}{
\ccc{(bf+gq)^2}
},
\qquad
A_{34}
=
-
\frac{
1-
2\ccc{q}\,\ccc{(bf+gq)}
\,\ccc{\left(\frac{g}{(bf+gq)^2}\right)}
}{
\ccc{(bf+gq)^2}
},
\end{align*}

\begin{align*}
A_{41}
&=
\frac{
2\ccc{g}\,\ccc{(bf+gq)}
\,\ccc{\left(\frac{b}{(bf+gq)^2}\right)}
}{
\ccc{(bf+gq)^2}
},
\qquad
A_{42}
=
\frac{
2\ccc{g}\,\ccc{(bf+gq)}
\,\ccc{\left(\frac{f}{(bf+gq)^2}\right)}
}{
\ccc{(bf+gq)^2}
},
\\
A_{43}
&=
-
\frac{
1-
2\ccc{g}\,\ccc{(bf+gq)}
\,\ccc{\left(\frac{q}{(bf+gq)^2}\right)}
}{
\ccc{(bf+gq)^2}
},
\qquad
A_{44}
=
\frac{
2\ccc{g}\,\ccc{(bf+gq)}
\,\ccc{\left(\frac{g}{(bf+gq)^2}\right)}
}{
\ccc{(bf+gq)^2}
}.
\end{align*}
and the right hand side is

\begin{align*}
b_1
&=
\pm\frac{3}{2\ccc{f}_{\log}}
+
\frac{3\ccc{b}}{2\ccc{(bf+gq)}_{\log}}
+
\frac{
\ccc{b}
-
\ccc{b}\,\ccc{\left(\frac{bf}{bf+gq}\right)}
}{
2\ccc{(bf+gq)}
},
\\
b_2
&=
\pm\frac{3}{2\ccc{b}_{\log}}
+
\frac{3\ccc{f}}{2\ccc{(bf+gq)}_{\log}}
+
\frac{
\ccc{f}
-
\ccc{f}\,\ccc{\left(\frac{bf}{bf+gq}\right)}
}{
2\ccc{(bf+gq)}
},
\\
b_3
&=
\frac{3\ccc{q}}{2\ccc{(bf+gq)}_{\log}}
-
\frac{
\ccc{q}\,\ccc{\left(\frac{bf}{bf+gq}\right)}
}{
2\ccc{(bf+gq)}
},
\\
b_4
&=
\frac{3\ccc{g}}{2\ccc{(bf+gq)}_{\log}}
-
\frac{
\ccc{g}\,\ccc{\left(\frac{bf}{bf+gq}\right)}
}{
2\ccc{(bf+gq)}
}.
\end{align*}
 We provide the exact form in the corresponding repository where it can be downloaded \cite{code}.

\section{Detail for analytical derivation} 
Here, we give now the details of the calculation to derive the analytical expression of the entropy-conservative two-point flux from Section \ref{sec_Analytical}. We first give the entropy potential in terms of our abbreviations yielding to~\eqref{eq_potential_model}. We have
\label{sec_analytical}
\begin{align*}
\Psi&=\begin{pmatrix}-u_2(A^{-2}+B^{-2})\\-u_1(A^{-2}+B^{-2})\\-u_4B^{-2}\\-u_3B^{-2}\end{pmatrix}\cdot\begin{pmatrix}\frac{1}{2}u_1A\\ \frac{1}{2}u_2A\\\frac{1}{2}u_3(A+B)\\\frac{1}{2}u_4(A+B)\end{pmatrix}+\frac{3}{2}\ln(AB)+\frac{1}{2}AB^{-1}\\
&=-\frac{1}{2}\big(A^2(A^{-2}+B^{-2})+A^2(A^{-2}+B^{-2})+2u_3u_4(A+B)B^{-2}-3\ln(AB)-AB^{-1}\big)\\
&=-\frac{1}{2}\big(2+2A^2B^{-2}+2(B-A)(B+A)B^{-2}-3\ln(AB)-AB^{-1}\big)\\
&=-\frac{1}{2}\big(4-AB^{-1}-3\ln(AB)\big).
\end{align*}
Before starting with the calcuation, we note that  for $n\in\mathbb{N}$
\[
0=\bbb{1}=\bbb{a^na^{-n}}=\bbb{a^n}\ccc{a^{-n}}+\ccc{a^n}\bbb{a^{-n}},
\]
and therefore obtain 
\begin{equation}
\bbb{a^{-n}}=-\bbb{a^n}\frac{\ccc{a^{-n}}}{\ccc{a^n}}.\label{eq:bbv-n}
\end{equation}
To find the numerical flux, we use \eqref{eq:tadmorsCondition} and get
\begin{align*}
0=&\bbb{\evar(\con)}\cdot F^\text{num}-\bbb{\Psi}=\begin{pmatrix}\bbb{-u_2(A^{-2}+B^{-2})}\\\bbb{-u_1(A^{-2}+B^{-2})}\\\bbb{-u_4B^{-2}}\\\bbb{-u_3B^{-2}}\end{pmatrix}\cdot\begin{pmatrix}F_1\\F_2\\F_3\\F_4\end{pmatrix}+\frac{1}{2}\bbb{4-AB^{-1}-3\ln(AB)}\\
=&-F_1\bbb{u_2(A^{-2}+B^{-2})}-F_2\bbb{u_1(A^{-2}+B^{-2})}-F_3\bbb{u_4B^{-2}}-F_4\bbb{u_3B^{-2}}\\
&-\frac{1}{2}\big(\bbb{AB^{-1}}+3\bbb{\ln(AB)}\big).
\end{align*}
The first four components are of the form $F_i\bbb{u_jX^{-2}}$, where $i,j\in\{1,2,3,4\}$ and $X\in\{A,B\}$. First, we find,
\[
\bbb{u_iX^{-2}}=\bbb{u_i}\ccc{X^{-2}}+\ccc{u_i}\bbb{X^{-2}},
\]
then we expand the last term using \eqref{eq:bbv-n} via
\[
\bbb{X^{-2}}=-\frac{\ccc{X^{-2}}}{\ccc{X^2}}\bbb{X^2},
\]
and calculate separately
\[
\bbb{A^2}=2\bbb{A}\ccc{A}=2\bbb{u_1u_2}\ccc{A}=2\ccc{A}\big(\bbb{u_1}\ccc{u_2}+\bbb{u_2}\ccc{u_1}\big)
\]
and
\begin{align*}
\bbb{B^2}&=2\bbb{B}\ccc{B}=2\ccc{B}\big(\bbb{u_1u_2}+\bbb{u_3u_4}\big)\\
&=2\ccc{B}\big(\bbb{u_1}\ccc{u_2}+\bbb{u_2}\ccc{u_1}+\bbb{u_3}\ccc{u_4}+\bbb{u_4}\ccc{u_3}\big).
\end{align*}
Further, we have
\[
\bbb{AB^{-1}}=\bbb{A}\ccc{B^{-1}}+\bbb{B^{-1}}\ccc{A}=\bbb{A}\ccc{B^{-1}}-\bbb{B}\frac{\ccc{B^{-1}}}{\ccc{B}}\ccc{A},
\]
%and
%\[
%\bbb{\log(u)}=\frac{\bbb{u}}{\ccc{u}_{\log}},
%\]
%where
%\[
%\ccc{u}_{\log}:=\frac{u^+-u^-}{\log(u)^+-\log(u)^-}.
%\]
and combining all, leads to
\begin{align*}
0&=-F_1\Big(\bbb{u_2}\ccc{A^{-2}+B^{-2}}+\ccc{u_2}\big(-\frac{\ccc{A^{-2}}}{\ccc{A^2}}2\ccc{A}(\bbb{u_1}\ccc{u_2}+\bbb{u_2}\ccc{u_1})\big)\\
&+\ccc{u_2}\big(-\frac{\ccc{B^{-2}}}{\ccc{B^2}}2\ccc{B}(\bbb{u_1}\ccc{u_2}+\bbb{u_2}\ccc{u_1}+\bbb{u_3}\ccc{u_4}+\bbb{u_4}\ccc{u_3})\big)\Big)\\
&-F_2\Big(\bbb{u_1}\ccc{A^{-2}+B^{-2}}+\ccc{u_1}\big(-\frac{\ccc{A^{-2}}}{\ccc{A^2}}2\ccc{A}(\bbb{u_1}\ccc{u_2}+\bbb{u_2}\ccc{u_1})\big)\\
&+\ccc{u_1}\big(-\frac{\ccc{B^{-2}}}{\ccc{B^2}}2\ccc{B}(\bbb{u_1}\ccc{u_2}+\bbb{u_2}\ccc{u_1}+\bbb{u_3}\ccc{u_4}+\bbb{u_4}\ccc{u_3})\big)\Big)\\
&-F_3\Big(\bbb{u_4}\ccc{B^{-2}}+\ccc{u_4}\big(-\frac{\ccc{B^{-2}}}{\ccc{B^2}}2\ccc{B}(\bbb{u_1}\ccc{u_2}+\bbb{u_2}\ccc{u_1}+\bbb{u_3}\ccc{u_4}+\bbb{u_4}\ccc{u_3})\big)\Big)\\
&-F_4\Big(\bbb{u_3}\ccc{B^{-2}}+\ccc{u_3}\big(-\frac{\ccc{B^{-2}}}{\ccc{B^2}}2\ccc{B}(\bbb{u_1}\ccc{u_2}+\bbb{u_2}\ccc{u_1}+\bbb{u_3}\ccc{u_4}+\bbb{u_4}\ccc{u_3})\big)\Big)\\
&+\frac{1}{2}\ccc{B^{-1}}(\bbb{u_1}\ccc{u_2}\ccc{u_1}\bbb{u_2})-\frac{1}{2}\ccc{A}\frac{\ccc{B^{-1}}}{\ccc{B}}(\bbb{u_1}\ccc{u_2}+\ccc{u_1}\bbb{u_2}+\bbb{u_3}\ccc{u_4}+\ccc{u_3}\bbb{u_4})\\
&-\frac{3}{2}\ccc{AB}_{\log}^{-1}\big(\ccc{B}(\bbb{u_1}\ccc{u_2}+\bbb{u_2}\ccc{u_1})+\ccc{A}(\bbb{u_1}\ccc{u_2}+\bbb{u_2}\ccc{u_1}+\bbb{u_3}\ccc{u_4}+\bbb{u_4}\ccc{u_3})\big).
\end{align*}
Defining
\[
\ccc{\tilde{C}}:=\ccc{C}\frac{\ccc{C^{-2}}}{\ccc{C^2}},\quad C\in\{A,B\},
\]
we find
\begin{align*}
0&=\bbb{u_1}\Big(2F_1\ccc{u_2}^2\ccc{\tilde{A}}+2f_1\ccc{u_2}^2\ccc{\tilde{B}}-F_2\ccc{A^{-2}+B^{-2}}\\
&\quad\quad+2F_2\ccc{u_1}\ccc{u_2}\ccc{\tilde{A}}+2F_2\ccc{u_1}\ccc{u_2}\ccc{\tilde{B}}+2F_3\ccc{u_2}\ccc{u_4}\ccc{\tilde{B}}+2F_4\ccc{u_3}\ccc{u_2}\ccc{\tilde{B}}\\
&\quad\quad+\frac{1}{2}\ccc{u_2}\ccc{B^{-1}}-\frac{1}{2}\ccc{u_2}\ccc{A}\frac{\ccc{B^{-1}}}{\ccc{B}}-\frac{3}{2}\ccc{u_2}(\ccc{A}+\ccc{B})\ccc{AB}_{\log}^{-1}\Big)\\
&+\bbb{u_2}\Big(-f_1\ccc{A^{-2}+B^{-2}}+2f_1\ccc{u_1}\ccc{u_2}\ccc{\tilde{A}}+2f_1\ccc{u_1}\ccc{u_2}\ccc{\tilde{B}}\\
&\quad\quad+2F_2\ccc{u_1}^2\ccc{\tilde{A}}+2\ccc{u_1}^2\ccc{\tilde{B}}+2F_3\ccc{u_1}\ccc{u_4}\ccc{\tilde{B}}+2F_4\ccc{u_1}\ccc{u_3}\ccc{\tilde{B}}\\
&\quad\quad+\frac{1}{2}\ccc{u_1}\ccc{B^{-1}}-\frac{1}{2}\ccc{u_1}\ccc{A}\frac{\ccc{B^{-1}}}{\ccc{B}}-\frac{3}{2}\ccc{u_1}(\ccc{A}+\ccc{B})\ccc{AB}_{\text{log}}^{-1}\Big)\\
&+\bbb{u_3}\Big(2f_1\ccc{u_2}\ccc{u_4}\ccc{\tilde{B}}+2F_2\ccc{u_1}\ccc{u_4}\ccc{\tilde{B}}+2F_3\ccc{u_4}^2\ccc{\tilde{B}}\\
&\quad\quad+F_4(2\ccc{u_3}\ccc{u_4}\ccc{\tilde{B}}-\ccc{B^{-2}})-\frac{1}{2}\ccc{u_4}\ccc{A}\frac{\ccc{B^{-1}}}{\ccc{B}}-\frac{3}{2}\ccc{u_4}\ccc{A}\ccc{AB}_{\log}^{-1}\Big)\\
&+\bbb{u_4}\Big(2f_1\ccc{u_2}\ccc{u_3}\ccc{\tilde{B}}+2F_2\ccc{u_1}\ccc{u_3}\ccc{\tilde{B}}+F_3(-\ccc{B^{-2}}+2\ccc{u_3}\ccc{u_4}\ccc{\tilde{B}})\\
&\quad\quad+2F_4\ccc{u_3}^2\ccc{\tilde{B}}-\frac{1}{2}\ccc{u_3}\ccc{A}\frac{\ccc{B^{-1}}}{\ccc{B}}-\frac{3}{2}\ccc{u_3}\ccc{A}\ccc{AB}_{\log}^{-1}\Big).
\end{align*}
From $\alpha\bbb{u_1}+\beta\bbb{u_2}+\gamma\bbb{u_3}+\delta\bbb{u_4}=0$, it follows that $\alpha=\beta=\gamma=\delta=0$, leading to the linear system  \eqref{eq_system}.

\section{Entropy-scaled right eigenvectors for two-layer thin film flow model}\label{Eiegn_vector_for_con}
Here, we will compute the entropy-scaled right eigenvectors in $x$-direction using the Barth scaling procedure~\cite{barth1999numerical}. We first construct the right eigenvector matrix $R^x$ corresponding to the Jacobian matrix $\frac{\p F(u)}{\p \con}$, which is given below,
\begin{align*}
	\begin{pmatrix}
		-\frac{f}{b} & \frac{fb-3gq}{4qb} & 0 & 0\\
        1 & \frac{fb-3gq}{4qf} & 0 & 0\\
        0 & \frac{g}{q} & -\frac{g}{q} & \frac{g}{q}\\
        0 & 1 & 1 & 1
	\end{pmatrix}.
\end{align*}
We want to determine the scaling matrix $T^x$ that will allow the scaled right-eigenvector matrix $\tilde{R}^x=R^x T^x$ to satisfy
\begin{equation}
	\frac{\p \con}{\p \evar} = \tilde{R^x} \tilde{R^x}^{\top},
\end{equation}
where the entropy variable $\evar$ is defined in~\eqref{eq:envar}. Now we follow~\cite{barth1999numerical} and define the matrix 
\begin{equation*}
	\mathcal{Y}^x = \left(R^x\right)^{-1} \frac{\p \con}{\p \evar}\left(R^x\right)^{-\top}.
\end{equation*}
For $b>0$, this yields
\begin{align*}
	\mathcal{Y}^x = \begin{pmatrix}
		\frac{f b^3\left(bf+gq\right)^2}{4 f^2 b^2 + 4bfgq +2g^2q^2}&0&0&0\\
        0&\frac{8b^3f^3q^2\left(bf+gq\right)}{3\left(bf-3gq\right)\left(2b^2f^2-2bfgq-3g^2q^2\right)} & 0 & 0\\
        0 & 0 & \frac{q\left(bf+gq\right)^2}{2g} & 0\\
        0 & 0 & 0 & \frac{q\left(bf+gq\right)^3}{2g\left(-bf+3gq\right)}
	\end{pmatrix},
\end{align*}
whereas, for $b<0$, we obtain,
\begin{align*}
	\mathcal{Y}^x = \begin{pmatrix}
		-\frac{f b^3\left(bf+gq\right)^2}{2gq\left(2 b f+g q\right)}&0&0&0\\
        0&-\frac{8b^3f^3q\left(bf+gq\right)}{3g\left(-bf+3gq\right)\left(2bf+3gq\right)} & 0 & 0\\
        0 & 0 & \frac{q\left(bf+gq\right)^2}{2g} & 0\\
        0 & 0 & 0 & \frac{q\left(bf+gq\right)^3}{2g\left(-bf+3gq\right)}
	\end{pmatrix},
\end{align*}
Then the scaling matrix $T^x$ is the square root of $\mathcal{Y}^x$. For $b>0$ it is given by,
\begin{align*}
	\begin{pmatrix}
		\Theta_1 & 0 & 0 & 0\\
        0&2\sqrt{\frac{2}{3}}\Theta_2 & 0 & 0\\
        0 & 0 & \sqrt{\frac{q\left(bf+gq\right)^2}{2g}} & 0\\
        0 & 0 & 0 & \sqrt{\frac{q\left(bf+gq\right)^3}{2g\left(-bf+3gq\right)}}
	\end{pmatrix}.
\end{align*}
where $\Theta_1=\sqrt{\frac{f b^3\left(bf+gq\right)^2}{4 f^2 b^2 + 4bfgq +2g^2q^2}}$ and $\Theta_2=\sqrt{\frac{b^3f^3q^2\left(bf+gq\right)}{\left(bf-3gq\right)\left(2b^2f^2-2bfgq-3g^2q^2\right)}}$. For $b<0$, the scaling matrix $T^x$ is given by
\begin{align*}
	\begin{pmatrix}
		\frac{\Theta_3}{\sqrt{2}} & 0 & 0 & 0\\
        0&2\sqrt{\frac{2}{3}}\Theta_4 & 0 & 0\\
        0 & 0 & \sqrt{\frac{q\left(bf+gq\right)^2}{2g}} & 0\\
        0 & 0 & 0 & \sqrt{\frac{q\left(bf+gq\right)^3}{2g\left(-bf+3gq\right)}}
	\end{pmatrix}.
\end{align*}
where $\Theta_3=\sqrt{-\frac{f b^3\left(bf+gq\right)^2}{gq\left(2 b f+g q\right)}}$ and $\Theta_4=\sqrt{-\frac{b^3f^3q\left(bf+gq\right)}{g\left(-bf+3gq\right)\left(2bf+3gq\right)}}$.
%--------------------------------------------------------------------
%--------------------------------------------------------------------
%\subsubsection*{Entropy-scaled right eigenvectors in $x$ and $y$ direction}\label{ESRE}
Finally, we present the complete expressions for the entropy-scaled right eigenvector matrices in the $x$ and $y$-directions for $b>0$, which are given below:
\begin{align*}
	\begin{pmatrix}
		-\frac{f\Theta_1}{b} & \frac{\Theta_2}{\sqrt{6}}\left(\frac{fb-3gq}{bq}\right) & 0 & 0\\
        \Theta_1 & \frac{\Theta_2}{\sqrt{6}}\left(\frac{fb-3gq}{fq}\right) & 0 & 0\\
        0 & \sqrt{\frac{2}{3}}\frac{2g\Theta_2}{q} & -\sqrt{\frac{g\left(bf+gq\right)^2}{2q}} & \sqrt{\frac{g\left(bf+gq\right)^3}{2q\left(-bf+3gq\right)}}\\
        0 & \sqrt{\frac{2}{3}}~2\Theta_2 & \sqrt{\frac{q\left(bf+gq\right)^2}{2g}} & \sqrt{\frac{q\left(bf+gq\right)^3}{2g\left(-bf+3gq\right)}}
	\end{pmatrix}.
\end{align*}
and the entropy-scaled right eigenvector matrices in the $x$ and $y$-directions for $b<0$ is are given below:
\begin{align*}
	\begin{pmatrix}
		-\frac{f}{b\sqrt{2}}\Theta_3 & \frac{\Theta_4}{\sqrt{6}}\left(\frac{fb-3gq}{bq}\right) & 0 & 0\\
        \frac{\Theta_3}{\sqrt{2}} & \frac{\Theta_4}{\sqrt{6}}\left(\frac{fb-3gq}{fq}\right) & 0 & 0\\
        0 & \sqrt{\frac{2}{3}}\frac{2g\Theta_4}{q} & -\sqrt{\frac{g\left(bf+gq\right)^2}{2q}} & \sqrt{\frac{g\left(bf+gq\right)^3}{2q\left(-bf+3gq\right)}}\\
        0 & \sqrt{\frac{2}{3}}~2\Theta_4 & \sqrt{\frac{q\left(bf+gq\right)^2}{2g}} & \sqrt{\frac{q\left(bf+gq\right)^3}{2g\left(-bf+3gq\right)}}
	\end{pmatrix}.
\end{align*}

\end{document}